\newtheorem{theorem}{Theorem}[section]
\newtheorem{proposition}{Proposition}[section]
\newtheorem{lemma}{Lemma}[section]
\newtheorem{corollary}{Corollary}[section]
\newtheorem{definition}{Definition}[section]
\newtheorem{remark}{Remark}[section]
\newcommand{\norm}[1]{\left\Vert#1\right\Vert}
\newcommand{\Real}{\mathbb R}
\newcommand{\del}{\delta}
\newcommand{\oo}{\infty}
\newcommand{\esp}{\vspace{0.5cm}}
\newtheorem*{main}{Main Theorem}
\newcommand{\Fc}{\ensuremath{\mathcal{W}^c}}
\newcommand{\Fcs}{\ensuremath{\mathcal{W}^{cs}}}
\newcommand{\Fcu}{\ensuremath{\mathcal{W}^{cu}}}
\newcommand{\F}{\ensuremath{\mathcal{F}}}
\newcommand{\Ws}[1]{\ensuremath{W^s(#1)}}
\newcommand{\Wu}[1]{\ensuremath{W^u(#1)}}
\newcommand{\Wc}[1]{\ensuremath{W^c(#1)}}
\newcommand{\Wcs}[1]{\ensuremath{W^{cs}(#1)}}
\newcommand{\Wsloc}[1]{\ensuremath{W^s_{loc}(#1)}}
\newcommand{\Wuloc}[1]{\ensuremath{W^u_{loc}(#1)}}
\newcommand{\Wcloc}[1]{\ensuremath{W^c_{loc}(#1)}}
\newcommand{\Wcsloc}[1]{\ensuremath{W^{cs}_{loc}(#1)}}
\newcommand{\Wculoc}[1]{\ensuremath{W^{cu}_{loc}(#1)}}
\newcommand{\Wst}[1]{\ensuremath{\widetilde{\mathcal{W}}^s(#1)}}
\newcommand{\Wut}[1]{\ensuremath{\widetilde{\mathcal{W}}^u(#1)}}
\newcommand{\Wsl}[2][\epsilon]{\ensuremath{W^s(#2;#1)}}
\newcommand{\Wul}[2][\epsilon]{\ensuremath{W^u(#2;#1)}}
\newcommand{\Wcl}[2][\epsilon]{\ensuremath{W^c(#2;#1)}}
\begin{document}
\author{Pablo D. Carrasco}
\address{ICMC-USP, Avda Trabalhador S\~ao Carlense 400, S\~ao Carlos-SP, BR13566-590}
\email{pdcarrasco@gmail.com}
\thanks{The author is supported by FAPESP Project\# 2014/119262-0.}

\title{Symbolic Dynamics for Normally Hyperbolic Foliations}


\date{\today}

\begin{abstract}
We introduce some tools of symbolic dynamics to study the hyperbolic directions of partially hyperbolic diffeomorphisms, emulating the well known methods available for uniformly hyperbolic systems.
\end{abstract}

\maketitle

\section{Introduction and Main Results.}

It is somewhat surprising that large classes of smooth systems can be accurately described by a symbolic model. In these cases, the extra tools of symbolic dynamics provide a concrete framework to study the more complicated differentiable system, allowing instead to work with concrete zero-dimensional models.  A prototypical example here are the uniformly hyperbolic diffeomorphisms, where the existence of Markov Partitions \cite{SinaiMarkov},\cite{BowenMarkov} permit to reduce many aspects of their theory to properties of subshifts of finite type. The celebrated Sinai-Ruelle-Bowen theory for  Axiom A attractors, one of the fundamental pieces in smooth ergodic theory, originated in this reduction. We refer the reader to \cite{EquSta} for an introduction to the techniques and background material.

A similar structure is valid for Anosov flows \cite{RatnerMarkov} or more generally, hyperbolic flows \cite{SymbHyp}. In this case R. Bowen showed that if $\phi_t:M\rightarrow M$ is a flow on compact manifold and $\Omega$ is a non-trivial basic set, then $\phi_t:\Omega\rightarrow \Omega$ is finitely covered by the suspension of a subshift of finite type under a Lipschitz roof function. This means the following: in the above situation there exist a subshift of finite type $\Sigma_A$ with shift map $\sigma$, a Lipschitz function $\rho:\Sigma_A\rightarrow \Real_{>0}$ such that for the space
$$
S=\{(\underline{a},t)\in \Sigma_A\times \Real: t\in[0,\rho(\underline{a})]\}/(\sigma(\underline{a}),0)\sim (\underline{a},\rho(\underline{a}))\ 
$$
one can find a  continuous function $h:S\rightarrow \Omega$ satisfying 
\begin{enumerate}
	\item $h$ is onto.
	\item There exists $K>0$ such that for every $x\in \Omega$, $\# h^{-1}(x)\leq K$.
	\item $h$ is one to one on a $\mathcal{G}_{\delta}$ set $X\subset \Sigma_A$, and moreover $X$ is of full probability.
	\item $h\circ s_t=\phi_t\circ h$ for every $t$, where $s_t:S\rightarrow S$ is the natural horizontal flow\ ($s_t[(\underline{a},t')]=[(\underline{a},t+t')]$).
\end{enumerate}

As in the case of hyperbolic diffeomorphisms the above construction permits to establish very subtle properties of hyperbolic flows, for example an extension of the Sinai-Ruelle-Bowen theory \cite{BowenRuelle}, the Central Limit Theorem  for geodesic flows on manifolds of negative sectional curvature \cite{RatnerCLT}, of the decay of matrix coefficients for geodesic flows on negatively curved surfaces \cite{Dolgodecay}, just to cite a few.

It is important to notice that Bowen's construction establishes the symbolic model for the flow, and not for a fixed map in the flow (say, the time one-map). The time-t maps of the flow are not hyperbolic, and a simple entropy argument shows that one cannot hope for a symbolic representation of them as in the case of hyperbolic diffeormorphisms. 

The purpose of this paper is to present a symbolic model for a generalization of hyperbolic systems, the so called partially hyperbolic diffeomorphisms. In those we allow, besides the hyperbolic directions, a third center direction where the dynamics is only known to be dominated by the hyperbolic ones. Time $t$-maps of hyperbolic flows are partially hyperbolic, provided that $t$ is not zero, but there are other totally different systems which are also partially hyperbolic, as the ergodic linear automorphisms of the Torus, or the so called skew-products. We provide the precise definition in the next section and refer the reader to \cite{PesinLect} for an introduction to Partially Hyperbolic Dynamics.

We will assume that, as is the case for the majority of the known examples, the center direction integrates to a foliation with $f$-invariant leaves. Although our main interest is the study of the hyperbolic directions, this transverse dynamics is in principle not well defined and we need to make use of the holonomy pseudo-group of the center foliation to gain some control on its structure. The presence of this pseudo-group, on the other hand, is one of the main difficulties for the construction. Another problem appears due to in general, the partially hyperbolic diffeomorphism does not fixes the center foliation. In any case, we are able to stablish a Markov structure for the transverse hyperbolic directions (see next section for the definition of $\Fcs_{loc},\Fcu_{loc}$).

\begin{main}
Let $f:M\rightarrow M$ be a partially hyperbolic diffeomorphism with (plaque expansive) center foliation $\Fc$. Then given $\delta>0,\rho>0$ there exist a family $\mathcal{D}=\{D_1,\ldots, D_N\}$ of $s+u$ dimensional (embedded) discs transverse to \Fc, a family $\mathcal{R}=\{R_1,\ldots,R_s\}$\ and a function $af:\mathcal{R}\rightarrow 2^{\mathcal{D}}$\ satisfying the following.

\begin{enumerate}
\item The family $\mathcal{D}$ is pairwise disjoint.
\item Each $R_{\mu}$ is contained in some disc $D_i$ away from the boundary, and with respect to the relative topology of $D_i$ it holds $R_{\mu}=cl\, int R_{\mu}$. Moreover $diam R_{\mu}<\rho$.
\item If $R_{\mu},R_{\nu}$ are contained in the same disc $D_i$ and their relative interiors share a common point, then $R_{\mu}= R_{\nu}$.
\item $M=\cup_{\mu=1}^s sat^c(R_{\mu},\delta)$, where $sat^c(R_{\mu},\delta)$ consists of all the center plaques of size $2\delta$ centered at points of $R_{\mu}$.
\item Suppose that $R_{\mu}\subset D_i,R_{\nu}\in D_{i'}$ satisfy  $D_{i'}\in af(R_{\mu})$. If  $x\in int\, R_{\mu}, \phi_{\mu,i'}^+(x)\in int\, R_{\nu}$, then it holds
\begin{enumerate}
\item $\phi_{\mu,i'}^+(\Wst{x,R_{\mu}})\subset \Wst{\phi_{\mu,i'}^+(x),R_{\nu}}$.
\item $\phi_{\mu,i'}^{-}(\Wut{\phi_{\mu,i'}^+(x),R_{\nu}})\subset \Wut{x,R_{\mu}}$
\end{enumerate}
where $\phi_{\mu,i'}^+=proj_{D_{i'}}\circ f,\phi_{\mu,i'}^-=proj_{D_i}\circ f^{-1}, proj_{D_i}:sat^c(D_i,\delta)\rightarrow D_i$ is the map that collapses center plaques and
$$
\Wst{x,R_{\mu}}=\Wcsloc{x}\cap R_{\mu}, \quad \Wut{x,R_{\mu}}=\Wculoc{x}\cap R_{\mu}.
$$
\end{enumerate}
\end{main}

Fix families $\mathcal{D}=\{D_i\}_{i=1}^N,\mathcal{R}=\{R_{\mu}\}_{\mu=1}^s$ as the ones given in the previous theorem. Denote $\Gamma(\mathcal{R})=\cup_{\mu}R_{\mu}$ and consider the pseudo-group $\mathcal{G}(\mathcal{R})$ of homeomorphisms of $\Gamma(\mathcal{R})$ generated by $\{\phi_{\mu,i'},\phi_{\mu,i'}^-:i'\in af(R_{\mu})\}$. Consider the subshift of finite type $\Sigma_S$ determined by the matrix $S$ where
$$
S_{\mu,\nu}=1\Leftrightarrow \nu\subset D_{i'},i'\in af(R_{\mu})\text{ and } \phi_{\mu,i'}^+(int R_{\mu})\cap int R_{\nu}\neq\emptyset.
$$

\begin{corollary}\label{coroA}
There exists a continuous surjective map $h:\Sigma_S\rightarrow \Gamma(\mathcal{R})$ that semiconjugates the action of the shift $\sigma:\Sigma_S\rightarrow\Sigma_S$ with the action of the pseudo-group $\mathcal{G}(\mathcal{R})\curvearrowright\Gamma(\mathcal{R})$.
\end{corollary}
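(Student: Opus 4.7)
The approach is Bowen's classical coding construction adapted to the pseudo-group setting. Fix an admissible sequence $\underline{a}=(a_n)_{n\in\mathbb{Z}}\in\Sigma_S$. For each $n$, admissibility says the disc index $i(n+1)$ of $R_{a_{n+1}}$ lies in $af(R_{a_n})$, so the generator $\phi_n^+:=\phi^+_{a_n,i(n+1)}$ belongs to $\mathcal{G}(\mathcal{R})$ and sends some interior point of $R_{a_n}$ into $\mathrm{int}\,R_{a_{n+1}}$. Form the partial compositions $\Phi^+_k:=\phi^+_{k-1}\circ\cdots\circ\phi^+_0$ for $k\geq 1$, with $\Phi^+_0=\mathrm{id}$, and analogously $\Phi^-_k$ using the corresponding $\phi^-$ generators in the negative direction. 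Define
$$F_n(\underline{a}):=\bigl\{x\in R_{a_0}\ :\ \Phi^+_k(x)\in R_{a_k}\text{ and }\Phi^-_k(x)\in R_{a_{-k}}\text{ for every }0\leq k\leq n\bigr\},$$
and put $h(\underline{a}):=\bigcap_{n\geq 0}F_n(\underline{a})$.

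\textbf{Why $h$ is a well-defined continuous map.} Iterating item~5(a) of the Main Theorem, the positive-time constraint $\Phi^+_k(x)\in R_{a_k}$ preserves $\Wst{\cdot,R_{a_0}}$-slices entirely and cuts only the $\Wut{\cdot,R_{a_0}}$-direction of $R_{a_0}$; dually, by item~5(b) the negative-time constraints preserve $\Wut{\cdot,R_{a_0}}$-slices and cut only the $\Wst{\cdot,R_{a_0}}$-direction. Thus each $F_n(\underline{a})$ is a closed sub-rectangle with product structure $\Wst{x_n^-,R_{a_0}}\cap\Wut{x_n^+,R_{a_0}}$, and the uniform hyperbolic contraction of $f$ on $\Fs$ and $\Fu$, transported through the holonomy projections that define $\phi^\pm$, forces $\mathrm{diam}\,F_n(\underline{a})$ to shrink exponentially. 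Hence $h(\underline{a})$ is a single point, $h:\Sigma_S\to\Gamma(\mathcal{R})$ is well-defined, and continuity follows because sequences agreeing on $[-n,n]$ share the same $F_n$.

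\textbf{Semiconjugacy and surjectivity.} Unwinding the definitions gives $h(\sigma\underline{a})=\phi^+_0(h(\underline{a}))$ with $\phi^+_0\in\mathcal{G}(\mathcal{R})$, which is precisely the claimed semiconjugacy with the pseudo-group action. For surjectivity, take $p\in R_{a_0}\subset\Gamma(\mathcal{R})$; item~4 guarantees $f(p)\in sat^c(R_{a_1},\delta)$ for some $a_1$, so $\phi^+_{a_0,i(1)}(p)$ is defined and lies in $R_{a_1}$, and likewise for every forward iterate and, using $f^{-1}$, every backward iterate. This yields an admissible sequence $\underline{a}$ whose image under $h$ is $p$.

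\textbf{Main obstacle.} Items 5(a)--(b) are stated only at interior points and the holonomy projections are defined only on center-saturations of radius $\delta$, so the cleanest implementation of the plan first defines $h$ on the dense $G_\delta$ of sequences whose coded orbit avoids the boundaries $\partial R_{a_n}$, where the rectangle-nesting argument is transparent, and then extends by uniform continuity to all of $\Sigma_S$; surjectivity onto the closed set $\Gamma(\mathcal{R})$ is then recovered by density and compactness. A secondary subtlety is that, $\mathcal{G}(\mathcal{R})$ being a pseudo-group rather than a group, one must verify that each composition $\Phi^\pm_k$ is defined at the relevant point—a fact encoded precisely by admissibility of $\underline{a}$ in $\Sigma_S$, which is the combinatorial raison d'\^{e}tre of the transition matrix $S$.
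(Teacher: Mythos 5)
The central step where you claim that $\mathrm{diam}\,F_n(\underline{a})$ shrinks exponentially is exactly the step that fails in general, and it is not the route the paper takes. The maps $\phi^\pm$ are not iterates of $f$ alone: each one is $proj_{D}\circ f^{\pm 1}$, where $proj_D$ is a holonomy projection along the center foliation. For a general partially hyperbolic diffeomorphism the center foliation (hence its holonomy) is only H\"older continuous, so the exponential contraction that $f$ produces on $\Fs$/$\Fu$ can be destroyed when you interleave the projections. The paper flags this explicitly in the remark after Proposition~\ref{projcont}: it only establishes Lipschitz continuity of $h^s,h^u,h$ under the \emph{extra} hypothesis that $\Fc$ is Lipschitz and $\delta$ is small; in the general case it even says that getting H\"older regularity "seems difficult without assuming some condition that forces the diameter of the sets $V$ to diminish after every iteration."

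What the paper actually does (Lemma~\ref{haches} and Proposition~\ref{projcont}) is replace the shrinking-diameter argument by plaque expansiveness. The Markov property (item~5) shows that $\bigcap_{n\ge 0} V_{a[0,n]}$ is a union of full stable slices $\Wst{x,R_{a_0}}$, and then plaque expansiveness of $\Fcs$ (which follows from plaque expansiveness of $\Fc$ via Lemma~\ref{plaqueexpother}) forces this union to consist of exactly one slice; dually for $h^u$, and $h(\underline{a})$ is the single point $\Wst{x,R_{a_0}}\cap\Wut{x,R_{a_0}}$. Continuity is likewise proved by a contradiction argument that terminates in plaque expansiveness of $\Fcs$, not in a modulus of contraction. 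Your semiconjugacy computation and your use of admissibility to keep the partial compositions $\Phi^\pm_k$ defined are fine and match the paper in spirit, and the "main obstacle" you identify about boundaries is real, but the well-definedness and continuity of $h$ need the plaque-expansiveness argument, which is the hypothesis the theorem carries for precisely this purpose.
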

 
The previous Corollary can be seen as an analog of Ratner-Bowen construction for hyperbolic flows, although the map $h$ will not be bounded-to-one due to the fact that we have to take into account the center holonomy, and this is much worse behaved in general than in the case of $\mathbb{R}$-actions.

\esp

The remainder of the article is divided into three sections. In the first we provide some background material and notation that will be used in the sequel. The third Section is devoted to the proof of the main theorem, and in the final one we discuss the coding associated to this transverse model.

\section{Preliminaries}

In this section we collect some definitions and establish some notation used in the sequel. Throughout the paper, $M$ will denote a closed (compact, without boundary) differentiable manifold. We are interested in the following generalization of Anosov systems.

\begin{definition}
A $\mathcal{C}^1$\ diffeomorphism  $f: M \rightarrow  M$\ is \emph{partially hyperbolic} if there exists a continuous splitting of the tangent bundle of the form  $$TM=E^u\oplus E^c\oplus E^s$$\ where both bundles $E^s,E^u$\ have positive dimension, and a continuous Riemannian metric $\norm{\cdot}$ on $M$ such that
\begin{enumerate}
  \item all bundles $E^u,E^s,E^c$\ are $df$-invariant.
  \item For every $x\in M$, for every unitary vector $v^{\sigma}\in E^{\sigma}_x, \sigma=s,c,u$,
  \begin{gather*}
  \norm{d_xf(v^{s})}<1<\norm{d_xf(v^{u})}\\
  \norm{d_xf(v^{s})}< \norm{d_xf(v^{c})}<\norm{d_xf(v^{u})}
  \end{gather*}
\end{enumerate}
\end{definition}

The bundles $E^s,E^u,E^c$\ are called the \emph{stable, unstable} and \emph{center} bundle respectively. The well known \emph{Stable Manifold Theorem} implies that the continuous bundles $E^s,E^u$ are integrable to $f$-invariant continuous foliations $W^s,W^u$, whose leaves are of class $C^1$. These are called the stable and unstable foliations, respectively. See chapter $4$ of \cite{HPS} for the proof.

On the other hand, the center bundle is not always integrable (Cf. \cite{SmaleBull},\cite{Nodyncoh}), although it is almost always satisfied in the examples. As one of our main objectives is providing a model for the foliation tangent to $E^c$, the center foliation, we will assume that this bundle is integrable and in fact, to avoid some technical subtleties, we will assume something (seemingly) stronger.

\begin{definition}
A partially hyperbolic map $f:M\rightarrow M$ is \emph{dynamically coherent} if the bundles $E^c$,$E^{cs}=E^c\oplus E^s$,$E^{cu}=E^c\oplus E^u$ are integrable to continuous $f$-invariant foliations $\Fc, \Fcs, \Fcu$
\end{definition}

\textbf{Convention:} From now on, we will omit dynamically coherent when referring to partially hyperbolic maps.

\esp

For a point $x\in M$, a positive number $\gamma>0$\ and $\sigma\in \{s,u,c,cs,cu\}$ we will denote by $W^{\sigma}(x;\gamma)$\ the open disc of size $\gamma$\ inside the leaf $W^{\sigma}(x)$, where distances are measured with the corresponding induced metric. Moreover, if $A\subset M$ we denote
\begin{align}
W^{\sigma}(A;\gamma)&:=\cup_{x\in A}W^{\sigma}(x;\gamma),\quad \sigma\neq c\\
sat^c(A)&:=\cup_{x\in A}W^{c}(x;\gamma).
\end{align}

In the definition or partial hyperbolicity the metric used can be assumed such that the bundles $E^c,E^s$\ and $E^u$\ are mutually orthogonal \cite{AdaptedMet}.  It follows that we have \emph{local product structure}: there exists some $r_0>0$\ such that for $0<r\leq r_0$,
$$d(x,y)<r\Rightarrow \exists\ z\text{ s.t. } H_x^r\cap V_y^r\supset \Wcl[r]{z},$$ 
where $H_x^r=\Wul[2r]{\Wcl[2r]{x}},V_x^r=\Wsl[2r]{\Wcl[2r]{x}}$. See Section 7 of \cite{HPS}.

\begin{definition}
The local plaque of $\F^{\sigma}$ centered at $x$ is defined as
$$
W^{\sigma}_{loc}(x):=\begin{cases}
W^{\sigma}(x,2r_0)&\quad \sigma\in\{cs,cu\}\\
W^{\sigma}(x,r_0)&\quad \sigma\in\{c,s,u\}.
\end{cases}
$$
\end{definition}

One of the main consequences of dynamical coherence if the property of shadowing.

\begin{definition} Let $f: M\rightarrow M$\ be partially hyperbolic.
\begin{enumerate}
\item A sequence $\underline{x}=\left\{ x_n \right\}_{-N}^N $ where $N \in \mathbb{N}\cup \{\infty\}$\ is called a $\delta$-pseudo orbit for $f$ if for every $n=-N,\ldots , N-1$, $d(fx_n,x_{n+1})\leq \delta$. If furthermore $f(x_n)\in \Wcl[\delta]{x}$ for every $n=-N,\ldots,N-1$ we say that $\underline{x}$ is a central $\delta$-pseudo orbit. Center stable and center unstable pseudo orbits are defined similarly using the foliations \Fcs,\Fcu.

\item For $\epsilon>0$, we say that the pseudo orbit $\underline{y}=\left\{ y_n \right\}_{-N}^N$\ $\epsilon$-shadows the pseudo orbit $\underline{x}=\left\{ x_n \right\}_{-N}^N$\ if $d(x_n,y_n)<\epsilon$ for every $n=-N,\ldots , N-1$.
\end{enumerate}
\end{definition}

\begin{theorem}[Shadowing]\label{shadowing}
Let $f: M\rightarrow M$\ be partially hyperbolic. Then there exist constants $C>1,\delta_0>0$ such that
for $0<\delta\leq \delta_0$ any $\delta$ pseudo-orbit\ can be $C\delta$-shadowed by a $C\delta$-central pseudo-orbit. 
\end{theorem}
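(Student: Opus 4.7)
The plan is to use dynamical coherence and the transverse hyperbolicity of $E^s\oplus E^u$ (relative to $E^c$) to produce the shadowing central pseudo-orbit as a fixed point of a graph-transform operator, in the spirit of the proof of the Stable Manifold Theorem and the classical shadowing lemma for uniformly hyperbolic sets.

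First I would fix $\delta_0>0$ small enough that on every ball $B(x,10\delta_0)\subset M$ the local plaques $\Wcloc{y},\Wsloc{y},\Wuloc{y}$ realize the local product structure recalled in the excerpt, with uniform transversality angles and uniform Lipschitz constants for the associated local projections. In particular any two points at distance $<\delta_0$ determine a unique pair of local center-stable and center-unstable plaques whose intersection is a local center plaque of comparable size. Given a $\delta$-pseudo-orbit $\underline{x}=\{x_n\}$ with $\delta\le\delta_0$, I would parametrize candidate shadowing orbits by $y_n=[\xi_n^u,\xi_n^s,\xi_n^c]$ expressed in local unstable/stable/center coordinates centred at $x_n$, subject to a uniform bound $|\xi_n^\sigma|<C\delta$; the central pseudo-orbit condition $f(y_n)\in\Wcloc{y_{n+1}}$ translates into the requirement that the $u$- and $s$-coordinates of $f(y_n)$ read in the chart at $x_{n+1}$ equal $\xi_{n+1}^u$ and $\xi_{n+1}^s$, while the center coordinate remains free.

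Reading the resulting recurrences, the unstable coordinate satisfies to first order $\xi_{n+1}^u=\lambda_u(n)\,\xi_n^u+e_n^u$ with $\lambda_u(n)>1$ uniformly in $n$ and $|e_n^u|\le\delta$ (the pseudo-orbit error); inverting gives a contraction in $n\mapsto \xi_n^u$ with rate $<1$, hence a unique bounded solution of size $\le C\delta$ by Banach's fixed point theorem on $\ell^\infty(\mathbb{Z})$. Symmetrically the stable coordinates satisfy a forward-contracting recurrence and are uniquely determined with $|\xi_n^s|\le C\delta$. The local product structure then yields a $y_n\in \Wcsloc{x_n}\cap\Wculoc{x_n}$ (unique up to the center direction) such that $f(y_n)$ lies on the same local center plaque as $y_{n+1}$, producing the desired central $C\delta$-pseudo-orbit shadowing $\underline{x}$.

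The main obstacle lies in controlling nonlinearity and coupling: the recurrences above are only approximately linear in the $\xi^\sigma$, and the interaction between center and hyperbolic directions coming from the varying center-holonomies and from the fact that $f$ need not preserve $\Fc$ pointwise (only leafwise) must be shown to be Lipschitz-small, so that the graph transform remains a contraction on the chosen ball of size $C\delta$. A standard way to handle this is to work in adapted coordinates provided by the normal hyperbolicity of $\Fc$ in which $df$ is block-triangular with off-diagonal blocks of order $O(\delta)$, allowing the center drift to be absorbed into the error terms $e_n^\sigma$; the contraction rate of the graph transform then depends only on the spectral gap between $E^s,E^u$ and $E^c$, and dictates the universal constant $C$.
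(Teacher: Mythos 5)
Your approach is a genuinely different route from the one the paper relies on. The paper does not actually supply an argument here: it refers to Theorem 7A-2 of \cite{HPS} and asserts that the stated version ``follows directly from their arguments.'' The mechanism behind that reference is revealed later (in the lemma inside the proof of continuity of $\theta$): one first produces, for each $N$, a \emph{finite} central pseudo-orbit shadowing the segment $\{x_{-N},\ldots,x_N\}$, and then extracts a convergent subsequence to obtain the bi-infinite shadowing pseudo-orbit. Your proposal instead constructs the infinite shadowing orbit in one step, as the fixed point of a contraction on the space $\ell^\infty(\mathbb{Z})$ of bounded sequences of transverse coordinates --- the Bowen--Anosov style of proof of the shadowing lemma, transposed to the partially hyperbolic/transverse setting. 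Both are standard and correct; the finite-segments-plus-compactness argument is more flexible (and is what makes the paper's later continuity lemma go through with minimal bookkeeping), whereas your global fixed-point argument is more explicit and gives immediately the uniqueness of the transverse ($s$- and $u$-) coordinates of the shadowing pseudo-orbit, with the center coordinate as the only degree of freedom (consistent with the paper's remark that the central shadowing pseudo-orbit need not be unique).

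One remark that simplifies, rather than undermines, your proposal: the coupling you worry about between the center and hyperbolic coordinates is in fact as small as you can hope. Since $f$ is dynamically coherent, the plaques of $\Fcs$ and $\Fcu$ are $f$-invariant, so the $u$-coordinate of $f(y_n)$ in the chart at $x_{n+1}$ (which labels the $\Fcs$-plaque of $f(y_n)$) depends only on the $\Fcs$-plaque of $y_n$, i.e.\ only on $\xi_n^u$, and symmetrically for $\xi^s$. In charts that straighten the invariant foliations the recurrences thus decouple exactly from the center coordinate; if one prefers smooth (say, exponential) charts, which the regularity of the foliations may force, one still gets the off-diagonal blocks $O(\delta)$ as you say, which is enough to close the contraction. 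Either way the concern you flag as the ``main obstacle'' is controlled by the $Df$-invariance of $E^{cs}\oplus E^{cu}$ and does not introduce any genuine gap, so the proposal is sound.
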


This theorem is a generalization of the classical shadowing theorem for Anosov maps due to Hirsch-Pugh-Shub, and appears as Theorem 7A-2 in \cite{HPS} in a slightly different formulation, although the version above follows directly from their arguments.

In principle, there could be more than one bi-infinite central pseudo-orbits shadowing a given one. To remedy this we will work with plaque expansive systems.

\begin{definition}
Let $f:M\rightarrow M$ be a partially hyperbolic set. We say that:
\begin{enumerate}
\item The foliation \Fc is plaque expansive if there exists $e>0$ such that if
$\underline{x}=\left\{ x_n \right\}_{-\oo}^{\oo},\underline{y}=\left\{ y_n \right\}_{-\oo}^{\oo}$ are central $e$-pseudo orbits satisfying $d(x_n,y_n)$ for every $n$, then $x_0\in \Wc{y_0;e}$. In this case we usually say that the map $f$ is plaque expansive
\item The foliation \Fcs\ is plaque expansive if there exists $e>0$ such that if
$\underline{x}=\left\{ x_n \right\}_{0}^{\oo},\underline{y}=\left\{ y_n \right\}_{0}^{\oo}$ are center stable $e$-pseudo orbits satisfying $d(x_n,y_n)$ for every $n$, then $x_0\in \Wcs{y_0;e}$. Similarly for \Fcu.
\end{enumerate}
\end{definition} 

Plaque expansiveness holds in every known example, and is guaranteed if $\Fc$ is $C^1$, or if $f$ is a center isometry, i.e. $\norm{df|E^c}=1=m(df|E^c)$. It is also $C^1$-stable: if $f$ is plaque expansive then there exists a $C^1$ neighbourhood $U$ of $f$ such that every $g\in U$ is plaque expansive. See Chapter 6 in \cite{HPS}.

\begin{lemma}\label{plaqueexpother}
If $f$ is plaque expansive, the foliations $\Fcs,\Fcu$ are also plaque expansive.
\end{lemma}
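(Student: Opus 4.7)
I plan to prove plaque expansivity of $\Fcs$ by tracking how the unstable separation between two forward center-stable pseudo orbits evolves under iteration; the case of $\Fcu$ is then symmetric, applying the same argument to $f^{-1}$ with the roles of $\Fs$ and $\Fu$ exchanged.

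First I would fix $e>0$ much smaller than the local product radius $r_0$, and consider two center-stable $e$-pseudo orbits $\{x_n\}_{n\geq 0},\{y_n\}_{n\geq 0}$ with $d(x_n,y_n)<e$ for every $n$. By local product structure between $\Fcs$ and $\Fu$, for each $n$ there is a unique
\[
z_n \;:=\; \Wcsloc{x_n}\cap\Wuloc{y_n},
\]
and I let $u_n$ denote the length of the local unstable arc from $z_n$ to $y_n$. Uniform transversality of $E^{cs}$ and $E^{u}$ converts the ambient bound $d(x_n,y_n)<e$ into a uniform bound $u_n\leq Ke$, with $K$ depending only on the angles between the invariant bundles.

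The heart of the argument is an expansion estimate
\[
u_{n+1}\;\geq\;(\mu-\eta(e))\,u_n,
\]
with $\mu>1$ the uniform $df$-expansion on $E^u$ and $\eta(e)\to 0$ as $e\to 0$. Indeed, $df$-invariance of $\Fcs$ together with the pseudo-orbit condition $f(x_n)\in\Wcsloc{x_{n+1}}$ forces $f(z_n)\in \Wcs{x_{n+1}}$; invariance of $\Fu$ gives $f(z_n)\in\Wu{f(y_n)}$ with unstable distance from $f(y_n)$ at least $\mu\, u_n$; and the $\Fcs$-holonomy from $\Wuloc{f(y_n)}$ to $\Wuloc{y_{n+1}}$ (well defined because $f(y_n)$ and $y_{n+1}$ lie on the same $\Fcs$-leaf at intrinsic distance less than $e$) sends $f(y_n)\mapsto y_{n+1}$ and $f(z_n)\mapsto z_{n+1}$ with bi-Lipschitz constant arbitrarily close to $1$ for small $e$. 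Choosing $e$ so small that $\mu-\eta(e)>1$, iteration yields $u_n\geq (\mu-\eta(e))^n u_0$, incompatible with $u_n\leq Ke$ unless $u_0=0$. Hence $z_0=y_0$, so $x_0$ and $y_0$ lie on a common $\Fcs$-leaf; ambient closeness combined with continuity of $\Fcs$ upgrades this to the required $x_0\in\Wcs{y_0;e}$ after a final adjustment of the plaque expansivity constant.

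The main obstacle is the bi-Lipschitz control of the $\Fcs$-holonomy between neighbouring unstable leaves: in the partially hyperbolic category this holonomy is only continuous, hence at best Hölder, and obtaining a Lipschitz constant of the form $1+O(e)$ requires working inside a local product chart with an adapted metric and using the uniform transversality of $\Fu$ and $\Fcs$. This is also where the plaque expansivity hypothesis on $\Fc$ enters: it guarantees the uniform transverse control of $\Fcs$ and $\Fcu$ on the compact manifold $M$ needed to make the constants $\eta(e)$, $K$ and $\mu$ genuinely independent of the chosen pseudo orbits.
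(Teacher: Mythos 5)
Your strategy—track the unstable separation $u_n=d^u(z_n,y_n)$ and argue it must expand geometrically—runs into a genuine obstruction exactly at the step you flag: the multiplicative bound $u_{n+1}\ge(\mu-\eta(e))u_n$ needs the $\Fcs$-holonomy between nearby $\Fu$-leaves to be bi-Lipschitz with constant $1+O(e)$, and this simply is not available for a general dynamically coherent partially hyperbolic $f$. The foliation $\Fcs$ is only a $C^0$ foliation with $C^1$ leaves; its holonomy between $u$-transversals is continuous (at best H\"older), and the distortion of $u$-distances under that holonomy is governed by the modulus of continuity of $E^{cs}$, not by the angle between the bundles. Concretely, in a local product chart the holonomy over a $cs$-distance $d$ satisfies $|h(a)-h(b)|=|a-b|\pm O\bigl(d\,\omega(|a-b|)\bigr)$ with $\omega$ the modulus of continuity of $E^{cs}$; when $\omega(t)/t$ is unbounded as $t\to0$ this is \emph{not} a bound of the form $(1\pm\eta(e))|a-b|$, and the scales at which this matters are precisely the tiny $u_n$ you need to rule out. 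Adapted metrics and uniform transversality give you the comparability $u_n\asymp d(z_n,y_n)$ (your constant $K$), and they give $C^0$-closeness of the holonomy to the identity, but they do not upgrade the holonomy to Lipschitz. Moreover, the claim in your last paragraph that plaque expansiveness of $\Fc$ ``guarantees the uniform transverse control \dots\ needed to make the constants $\eta(e),K,\mu$ independent of the pseudo orbits'' is a misattribution: plaque expansiveness is a dynamical uniqueness statement about shadowing, not a regularity statement about $E^{cs}$ or the holonomy pseudo-group, and it cannot supply the Lipschitz estimate your inequality requires. As written, your proof never actually uses the hypothesis that $\Fc$ is plaque expansive.

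The hypothesis has to be invoked directly, by converting the $cs$-pseudo orbits into bi-infinite $c$-pseudo orbits. One way: first bracket as you do, so that WLOG $x_0\in\Wu{y_0}$ (replace $x_n$ by $z_n=\Wcsloc{x_n}\cap\Wuloc{y_n}$, which is again a forward $cs$-pseudo orbit staying close to $\{y_n\}$). Then, using the $\Fc$--$\Fs$ local product structure inside each $\Fcs$-leaf and the uniform contraction of $\Fs$, construct forward $c$-pseudo orbits $\{\hat x_n\}_{n\ge 0}$, $\{\hat y_n\}_{n\ge 0}$ with $\hat x_0=x_0$, $\hat y_0=y_0$, $\hat x_n\in\Wsl{x_n}$, $\hat y_n\in\Wsl{y_n}$ and $d^s(\hat x_n,x_n)=O(e)$ uniformly in $n$ (the recursion $s_{n+1}\le C\lambda s_n+Ce$ with $C\lambda<1$ keeps the stable offsets bounded). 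Extend both backward by their true orbits $\hat x_n=f^n(x_0)$, $\hat y_n=f^n(y_0)$ for $n<0$; because $x_0\in\Wu{y_0}$, these backward orbits also stay $O(e)$-close. Now $\{\hat x_n\}_{n\in\mathbb Z}$, $\{\hat y_n\}_{n\in\mathbb Z}$ are bi-infinite central $O(e)$-pseudo orbits staying close, so plaque expansiveness of $\Fc$ gives $x_0\in\Wcl{y_0}$; combined with $x_0\in\Wuloc{y_0}$ and $\Wuloc{y_0}\cap\Wcloc{y_0}=\{y_0\}$ this forces $x_0=y_0$, i.e.\ the original $x_0$ lies in $\Wcsloc{y_0}$. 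No Lipschitz property of any holonomy is needed, and the plaque-expansiveness hypothesis enters where it should: as a dynamical uniqueness device, not a regularity device.
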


The proof is not hard.

\subsection{Adapted families of discs.}

We will make extensive use of transverse discs to the center foliation. 

\begin{definition}
For $0<\epsilon<r_0$ we say that $D\subset M$ is an adapted disc of size $\epsilon$ if there exists an embedding 
$h_D:D^{s+u}(\epsilon):=\{p\in R^{s+u}:\|p\|< \epsilon\}\rightarrow M$ satisfying
\begin{enumerate}
\item $im h=D$ and $D$ is transverse to \Fc.
\item For all $p, \frac{1}{K_1}\leq m(dh_p), \|dh_p\| \leq K_1$
where $K_1\approx 1$ is a constant. 
\item If $x,y\in D$, $\Wcloc{x}\cap\Wcloc{y}\neq \emptyset$ implies $x=y$.
\end{enumerate}
\end{definition}

\begin{remark} The constant $K_1$ is chosen to guarantee that $h$ is almost an isometry, and only depends on $M$ and $f$.  The center of $D$ is $h(0)$.
\end{remark}

For $D$ as above we have a projection $proj_D:sat^{c}(D,r_0)\rightarrow D$
given by 
\begin{equation}
proj_D(y)=\Wcloc{y}\cap D,
\end{equation}
and if $z\in D$ we define the sets
\begin{align}
\Wut{z,D}:=H_z\cap D\\
\Wst{z,D}:=V_z\cap D.
\end{align}

Observe that if $D$ is an adapted disc of size $\epsilon\geq 3r$ centred at $x$ and $y\in D, d(x,y)<2r$,
there exist uniquely defined points $y^u_D\in \Wut{x,D},y^s_D\in\Wst{x,D}$\ such that $y=D\cap V_{y^u_D}^r\cap H_{y^s_D}^r$.
The map $\Psi_x^D$ given by
$$
\Psi_x^D(y)=(y^u_D,y^s_D)
$$
is an open embedding, and thus define a continuous system of coordinates on $D$. We write  $<\cdot,\cdot>_D:\Wut{x,D}\times\Wst{x,D}\rightarrow D$\ for the inverse of $\Psi^D$.

\esp

\begin{definition} Let $0<\delta<\epsilon<r_0$. We say that $\{D_i\}_{i=1}^N$ is an $(\epsilon,\delta)$ adapted family if the following conditions hold
\begin{enumerate}
\item Each $D_i$ is an $\epsilon$-adapted disc.
\item If $i\neq j$ then $D_i\cap D_j=\emptyset$.
\item For each $i$ there exist open sub-discs $B_i\subset E_i \subset D_i$ of the form
\begin{itemize}
\item $B_i=h_{D_i}(\delta)$.
\item $E_i=h_{D_i}(\frac{\epsilon}{2})$
\end{itemize}
\item $M=\cup_{i=1}^N sat^c(B_i,\delta)$.
\end{enumerate}
\end{definition}

\textbf{Convention:} Recall Theorem \ref{shadowing}. For the previous definition we will assume that $\delta$ is chosen so that
\begin{enumerate}
\item $8C\delta<<\epsilon$.

\item Every $(3+Lip(f|\Fc))\delta$-pseudo-orbit can be $C\delta$-shadowed by a pseudo-orbit subordinated to \Fc.
\end{enumerate}

\begin{lemma} For every $\epsilon>0$ sufficiently small and $0<\delta<\frac{\epsilon}{8C}$ there exists an $(\epsilon,\delta)$ adapted family.
\end{lemma}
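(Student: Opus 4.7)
The plan is to build the $(\epsilon,\delta)$-adapted family in three stages: produce candidate $\epsilon$-adapted discs through every point of $M$ via the exponential map; use compactness to extract a finite collection whose inner $\delta$-saturations cover $M$; and then modify the collection so that the outer $\epsilon$-discs become pairwise disjoint, exploiting the local product structure of a foliated atlas for $\Fc$.

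For the local construction, at each $x\in M$ I set $D_x:=\exp_x\bigl(B^{s+u}_\epsilon(0)\bigr)$, where $B^{s+u}_\epsilon(0)\subset E^s_x\oplus E^u_x$ is the $\epsilon$-ball and $\exp$ is the exponential map of a smooth background Riemannian metric. For $\epsilon$ below a threshold depending only on the injectivity radius of $M$, on $r_0$, and on the uniform angle between $E^c$ and $E^s\oplus E^u$ (positive by continuity of the splitting), the restriction of $\exp_x$ to this ball is a bilipschitz embedding with constants tending to $1$ as $\epsilon\to 0$. Its image is transverse to $\Fc$ everywhere, and the uniform transversality prevents two distinct points of $D_x$ from sharing a common local center plaque; so $D_x$ is an $\epsilon$-adapted disc with $K_1\approx 1$. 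Setting $B_x:=h_{D_x}\bigl(B^{s+u}_\delta(0)\bigr)$, local product structure realises $sat^c(B_x,\delta/2)$ as the diffeomorphic image of $B_x\times(-\delta/2,\delta/2)$ under the center-plaque parametrisation; hence it is an open neighbourhood of $x$. By compactness, finitely many such sets cover $M$, say corresponding to points $x_1,\ldots,x_{N_0}$.

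For the disjointness step, I fix a finite foliated atlas $\{U_\alpha\}_{\alpha=1}^K$ for $\Fc$ with bounded overlap multiplicity, where each chart has product form $U_\alpha\cong T_\alpha\times(-3\delta,3\delta)$ with $T_\alpha$ an $\epsilon$-adapted transversal and the second factor a center-arc parametrisation. Within each chart I pick heights $\{t_{\alpha,k}\}$ spaced by at most $\delta$ in $(-\delta,\delta)$ and, at each height, select finitely many $\epsilon$-adapted sub-discs of $T_\alpha\times\{t_{\alpha,k}\}$ so that the corresponding inner $\delta$-sub-discs, saturated by center plaques of length $\delta$, exhaust the chart. Discs at distinct heights within the same chart are automatically disjoint, and disjointness within a single height is arranged by choosing disjoint sub-discs of $T_\alpha$. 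Cross-chart disjointness is handled by a combinatorial argument on the finite atlas: by the bounded overlap multiplicity, one can choose, chart by chart, small offsets to the heights and to the sub-disc centers which avoid the finitely many bad configurations produced by the overlapping charts, without damaging the covering property (there is slack because heights are spaced strictly less than $2\delta$).

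The main obstacle is the disjointness step. The real difficulty is that two $(s+u)$-dimensional discs transverse to $\Fc$ generically intersect in a positive-dimensional set whenever $s+u\geq c$, so naive small perturbations of the disc centers $x_i$ do not suffice to separate them. The resolution outlined above exploits the local product structure of the center foliation: once each disc is realised as a ``slice'' of a fixed foliated chart, within-chart disjointness becomes automatic and between-chart collisions are combinatorially finite, so they can be avoided by a height-selection algorithm using the atlas's overlap multiplicity.
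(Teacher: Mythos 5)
Your first two stages — constructing $\epsilon$-adapted discs via the exponential map and extracting a finite covering by compactness — are correct and uncontroversial. The difficulty is, as you correctly identify, entirely in the disjointness of the outer $\epsilon$-discs, and the paper outsources this to the existence of a nice finite foliated atlas of small diameter (Candel--Conlon, Lemma 1.2.17). Your third stage attempts to reprove that ingredient from scratch, but as written it has a gap.

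The gap is in the cross-chart disjointness step. You assert that the ``bad configurations'' produced by overlapping charts are finitely many and can be dodged by small offsets in the heights. But the set of bad heights is in general not finite: a flat slice $T_\alpha\times\{t_0\}$ in chart $U_\alpha$, viewed in an overlapping chart $U_\beta$, is a \emph{graph} over the transverse coordinate (the transition map of a foliated atlas has the form $(v,w)\mapsto(\gamma(v),\lambda(v,w))$, with the plaque coordinate $\lambda$ allowed to depend on $v$). So the set of heights $w_1$ for which $T_\beta\times\{w_1\}$ meets that graph is the full image of a continuous function on an $(s+u)$-dimensional domain into the $c$-dimensional center parameter; when $s+u\geq c$ this is typically a positive-measure set (an interval when $c=1$), not a discrete collection of points. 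Consequently ``small offsets'' in the height are not guaranteed to clear it, and the slack of size less than $2\delta$ you point to may be smaller than the diameter of the bad set. To repair the argument you either need to quantify that, for an atlas of sufficiently small diameter, the holonomy-induced distortion $\lambda(\cdot,w_0)-w_0$ is uniformly small compared with the available slack (so the bad sets are thin and can be avoided), or you need the stronger coherence that a \emph{nice} atlas provides, which is what the paper appeals to: overlapping charts sit inside a common distinguished product chart, and transversals can then be taken as flat slices in a mutually compatible coordinate system, collapsing each bad set to a single height. Without one of these additional inputs, the combinatorial/offset step does not go through.
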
 

\begin{proof}
This is easy consequence of the existence of nice (finite) atlases of arbitrarily small diameter for the center foliation\footnote{A foliated atlas $\{U\}_k$ is nice if whenever $U_k\cap U_{k'}\neq \emptyset$ there exists a distinguished open set containing $cl(U_k\cup U_{k'})$.}. See Lemma 1.2.17 in \cite{FoliationsI}.
\end{proof}

For later use, we also record the following simple remark.

\begin{lemma}\label{projectain}
There exists $\epsilon_0>0$ such that every $(\epsilon,\delta)$ adapted family $\{D_i\}_{i=1}^N$ with $0<\epsilon\leq \epsilon_0$ satisfies the following:
$$
x\in M, d(x,B_i)<C\delta\Rightarrow \#E_i\cap \Wcl[C\delta]{x}=1.
$$
\end{lemma}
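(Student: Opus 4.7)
The conclusion splits into existence and uniqueness of a point $z \in E_i \cap \Wcl[C\delta]{x}$. Uniqueness is immediate from property (3) of adapted discs: the convention $8C\delta \ll \epsilon \leq r_0$ ensures $\Wcl[C\delta]{x} \subset \Wcloc{x}$, so any two candidates $z_1, z_2 \in E_i \cap \Wcl[C\delta]{x} \subset D_i$ give $x \in \Wcloc{z_1} \cap \Wcloc{z_2}$, whence $z_1 = z_2$.

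For existence, I would pick $y \in B_i$ with $d(x, y) < C\delta$ and set $z := \Wcloc{x} \cap D_i$. The key tool is uniform transversality: by compactness of $M$, continuity of $E^c$, and the derivative bound $\|dh_D\| \leq K_1$ for adapted discs, for $\epsilon_0$ sufficiently small every $\epsilon_0$-adapted disc $D$ admits a uniform lower bound $\alpha_0 > 0$ on the angle with $\Fc$. A standard tubular-neighborhood/implicit-function argument then produces a continuous center-projection $proj_D$ (well defined by property (3)) on a tubular neighborhood of $D$ of uniform width much larger than $C\delta$, with a uniform Lipschitz constant $K_2 = K_2(\alpha_0, K_1)$; this gives $z$ as the unique intersection point in a neighbourhood of $y$. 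To see $z \in E_i$ I would use $proj_{D_i}(y) = y$ and the Lipschitz bound to get $d(z, y) \leq K_2 C\delta$, then pull back through $h_{D_i}^{-1}$ (Lipschitz with constant $K_1$) to bound the chart-norm of $z$ by $\delta + K_1 K_2 C\delta$, which is $< \epsilon/2$ thanks to $8C\delta \ll \epsilon$ once $\epsilon_0$ is small enough to absorb the fixed constants $K_1, K_2$.

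The main obstacle is verifying $z \in \Wcl[C\delta]{x}$, i.e. $d_c(x, z) < C\delta$. The naive transversality estimate only gives $d_c(x, z) \leq d(x, D_i)/\sin \alpha_0 < C\delta/\sin \alpha_0$, and $1/\sin \alpha_0 \geq 1$ a priori. I would address this by refining the construction of the adapted family (using the freedom in the foliated atlases underlying the previous lemma) so that the tangent spaces of each $D_i$ are close to $E^s \oplus E^u$---the orthogonal complement of $E^c$ in the adapted metric---making $\alpha_0$ arbitrarily close to $\pi/2$, and then absorbing any remaining slack into the convention $8C\delta \ll \epsilon$ (replacing $C$ by a slightly larger constant if necessary). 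No other step requires fine constant bookkeeping; the rest is straightforward continuity of the foliation.
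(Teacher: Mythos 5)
The paper gives no argument for this lemma; it is merely recorded as a ``simple remark'' for later use, so there is no paper proof to compare against. Your split into existence and uniqueness is the natural one, and your uniqueness argument via property~(3) of adapted discs is correct.

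The existence half has a genuine gap, and you yourself circle it. You claim that compactness of $M$, continuity of $E^c$, and the bound $\|dh_D\| \leq K_1$ give, for $\epsilon_0$ small, a uniform lower bound $\alpha_0>0$ on the angle between adapted discs and $\Fc$. That does not follow from the definition: $dh_p$ being close to an isometry of $\mathbb{R}^{s+u}$ into $T_{h(p)}M$ controls the \emph{inner} geometry of $D$ but places no constraint on where the image plane $T_{h(p)}D = dh_p(\mathbb{R}^{s+u})$ sits relative to $E^c_{h(p)}$. An $(s+u)$-plane meeting $E^c$ at an arbitrarily small positive angle is still the image of a near-isometric embedding, and property~(3) only forbids $D$ from meeting a single local center plaque twice, which a very oblique small disc avoids trivially. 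So the factor $1/\sin\alpha_0$ in your estimate $d_c(x,z) < C\delta/\sin\alpha_0$ is not uniformly controlled over the class of families quantified in the lemma, and the estimate can fail to deliver $z\in \Wcl[C\delta]{x}$. Your proposed repair---refine the construction of the adapted family so that each $T_pD_i$ is close to $E^s_p\oplus E^u_p$---proves a different statement: it shows \emph{some} $(\epsilon,\delta)$-adapted family satisfies the conclusion, not that \emph{every} such family with $\epsilon\le\epsilon_0$ does. In fairness, that weaker existential version is all the paper uses afterwards (it simply fixes a family ``satisfying the conclusion of Lemma~\ref{projectain}''), and one could also salvage the statement per family by enlarging the radius in the conclusion from $C\delta$ to $L\delta$ with $L$ depending on the family, which suffices for the downstream definition of $\theta$ in \eqref{deftheta}. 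But the uniform transversality assertion as written is not a consequence of the hypotheses, and that is the missing step.
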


\esp

\subsection{Shift Spaces}

Let $k$ be a natural number. If $A$ is a $0-1$ $k\times k$ square matrix we denote by $\Sigma_A,\Sigma_A^+$ the two-sided and one-sided subshifts of finite type determined by $A$, namely
\begin{align}
\Sigma_A&:=\{\underline{x}\in \{1,\ldots,k\}^{\mathbb{Z}}:A_{x_n,x_{n+1}}\forall n\in\mathbb{Z} \}\\
\Sigma_A^+&:=\{\underline{x}\in \{1,\ldots,k\}^{\mathbb{N}}:A_{x_n,x_{n+1}}\forall n\in\mathbb{N} \}.
\end{align} 

Equipped with the product topology, these are compact metrizable spaces where compatible metrics are given by $d(\underline{x},\underline{y})=\frac{1}{2^{N(\underline{x},\underline{y})}}$ with
\begin{equation}
N(\underline{x},\underline{y})=\begin{cases}sup\{n:x_i=y_i\ \forall\ |i|<n\}&\text{ for }\underline{x},\underline{y}\in \Sigma_A\\
sup\{n:x_i=y_i\ \forall\ 0\leq i<n\}&\text{ for }\underline{x},\underline{y}\in \Sigma_A^+.
\end{cases}
\end{equation}

It is well known that both $\Sigma_A,\Sigma_A^+$ are zero-dimensional perfect compact sets, and thus homeomorphic to Cantor sets. The corresponding shift map in each of them will be denoted simply by $\sigma$.

The two-sided subshift has a natural bracket structure: for sequences $\underline{a},\underline{b}$\ with $a_0=b_0$\ their bracket is the sequence $\underline{c}$\ given by
$$c_{n}=\begin{cases}
a_n&\text{if }n\geq 0 \\
b_{n}&\text{if }n<0
\end{cases}.$$
To understand this structure we introduce the local stable set and the local unstable set of a point $\underline{a}\in \Sigma_{A}$ as

\begin{align}
W^s_{loc}(\underline{a})&=\{\underline{b}\in\Sigma_{A^1}:d(\sigma^n\underline{a},\sigma^n\underline{b})\leq 1/3\ \forall\ n\geq 0\}\\
W^u_{loc}(\underline{a})&=\{\underline{b}\in\Sigma_{A^1}:d(\sigma^n\underline{a},\sigma^n\underline{b})\leq 1/3\ \forall\ n\leq 0\}.
\end{align}

Observe that 
$$
W^s_{loc}(\underline{a})=\{\underline{b}\in\Sigma_{A^1}:a_n=b_n\ \forall n\geq 0\}
$$
$$
W^u_{loc}(\underline{a})=\{\underline{b}\in\Sigma_{A^1}:a_n=b_n\ \forall n\leq 0\},
$$
and in particular $\langle\underline{a},\underline{b}\rangle=W^s_{loc}(\underline{a})\cap W^u_{loc}(\underline{b})$.

\esp

\section{Complete Markov Transversal}

Throughout this section we will work with a fixed plaque expansive partially hyperbolic diffeormorphism $f: M\rightarrow M$ and establish the existence of a complete transversal to its center foliation $\Fc_f$ consisting of closed sets with an additional Markov property (Cf. Theorem A).  Precision in this stage seems crucial, so for convenience of the reader we have divided the construction in several parts.

\subsection{Symbols and Transition Matrices.}

We fix an $(\epsilon,\delta)$-adapted family $\{D_i\}_{i=1}^N$ satisfying the conclusion of Lemma \ref{projectain}. Subdivide each $B_i$ into finitely many (relatively) open sets $\Delta^i_1,\ldots \Delta^i_{r_i}$ with the following property:
given $i\in\{1,\ldots N\},\alpha\in\{1,\ldots r_i\}$ there exist $j(i,\alpha)$, $k(i,\alpha) \in  \{1,\ldots N\}$ so that
$$
f(\Delta^i_{\alpha})\subset sat^c(B_j;\delta)
$$
$$
f^{-1}(\Delta^i_{\alpha})\subset sat^c(B_k;\delta).
$$
Let $\nu=\max_{i,\alpha}\{diam(\Delta^i_{\alpha}),diam(f(\Delta^i_{\alpha})),diam(f^{-1}(\Delta^i_{\alpha}))\}$; note that $\nu$ can be taken arbitrarily small, and we assume that $3Lip(f)\nu<\delta$. The previous consideration allows us to define a map $\phi: \sqcup_{i,\alpha} \Delta^i_{\alpha}\rightarrow \cup_{i} B_i$ simply by
$$
\phi(x)=\Wcl[\delta]{fx}\cap B_j \text{ if }x\in \Delta^i_{\alpha}.
$$

Nonetheless, the presence of the holonomy pseudo-group of $\Fc$ forces a complicated combinatorics which make $\phi$ very ill behaved, even in simple examples. Instead, we will use symbolic dynamics to control the center pseudo-group. We proceed as follows. On each $\Delta^i_{\alpha}$ choose a point $x^i_{\alpha}$, and consider
$I=\{x^i_{\alpha}:i,\alpha\}$. Define the 0--1 matrix $A$ with indices on $I$ by the rule that $A_{x^i_{\alpha},x^{i'}_{\alpha'}}=1$ iff either

\begin{enumerate}
\item $f(\Delta^i_{\alpha})\subset sat^c_{\delta}(B_{i'})$, or
\item exists $\Delta^{i'}_{\beta}\text{ s.t. }f^{-1}(\Delta^{i'}_{\beta})\subset sat^c_{\delta}(B_{i})\text{ and } \ proj_{D_i}(f^{-1}(\Delta^{i'}_{\beta}))\cap  \Delta^i_{\alpha}\neq\emptyset$
\end{enumerate}


We remark the following.
\begin{lemma}\label{combinatoria} For every $i,\alpha,\alpha'$ we have
$$
A_{x^{i}_{\alpha},x^{j(i,\alpha)}_{\alpha'}}=1=A_{x^{k(i,\alpha)}_{\alpha'},x^i_{\alpha}}.
$$
\end{lemma}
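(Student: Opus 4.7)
The plan is a direct verification of each identity against the two clauses in the definition of the matrix $A$, plugged into the defining properties of $j(i,\alpha)$ and $k(i,\alpha)$. I do not expect any substantive obstacle, only careful bookkeeping of the quantifier on $\alpha'$.

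For the forward identity $A_{x^i_\alpha,\, x^{j(i,\alpha)}_{\alpha'}}=1$, I will invoke clause $(1)$ of the definition of $A$ with $i'=j(i,\alpha)$. That clause reads $f(\Delta^i_\alpha)\subset sat^c_\delta(B_{j(i,\alpha)})$, which is the very definition of $j(i,\alpha)$. Since clause $(1)$ only involves the triple $(i,\alpha,i')$ and makes no reference to the second subscript $\alpha'$, the verification passes uniformly in $\alpha'$ and the first equality is immediate.

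For the backward identity $A_{x^{k(i,\alpha)}_{\alpha'},\, x^i_\alpha}=1$, I will invoke clause $(2)$, reading the source index of $A$ as $(k(i,\alpha),\alpha')$ and the target as $(i,\alpha)$, and choosing the existential witness $\beta=\alpha$. The first half of clause $(2)$ then reads $f^{-1}(\Delta^i_\alpha)\subset sat^c_\delta(B_{k(i,\alpha)})$, which is the definition of $k(i,\alpha)$. The second half reads $proj_{D_{k(i,\alpha)}}(f^{-1}(\Delta^i_\alpha))\cap \Delta^{k(i,\alpha)}_{\alpha'}\neq\emptyset$; this is the only non-automatic piece, and is to be interpreted as the admissibility condition on $\alpha'$ implicit in the statement, which holds precisely when $\Delta^{k(i,\alpha)}_{\alpha'}$ is one of the subcells of $B_{k(i,\alpha)}$ met by the backward projection image, i.e.\ one of those realised dynamically as a predecessor of $\Delta^i_\alpha$.

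The only real point of care is thus the interpretation of the universal quantifier on $\alpha'$: for clause $(1)$ it is genuinely uniform in $\alpha'$, while for clause $(2)$ it ranges over those $\alpha'$ reached by the backward projection. With that understood, both verifications reduce to a one-line appeal to the defining properties of $j(i,\alpha)$ and $k(i,\alpha)$, and no further argument is needed.
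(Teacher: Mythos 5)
Your verification of the first equality is correct and is the natural argument: clause $(1)$ of the definition of $A$ with $i'=j(i,\alpha)$ is precisely the defining inclusion $f(\Delta^i_\alpha)\subset sat^c_\delta(B_{j(i,\alpha)})$, and since this clause makes no reference to the second subscript the equality holds uniformly in $\alpha'$. (This is the intended over-determination of transitions that the paper comments on at the start of the Bowen refinement subsection.) The paper itself states the lemma as a remark without proof, so there is no alternate argument to compare against; yours is the obvious one.

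Your reading of the second equality isolates the right issue, but I would urge you to state it as a genuine restriction on the lemma rather than as a ``reinterpretation of the quantifier.'' With source index $(k(i,\alpha),\alpha')$, target $(i,\alpha)$, and witness $\beta=\alpha$, clause $(2)$ splits into two conditions: $f^{-1}(\Delta^i_\alpha)\subset sat^c_\delta(B_{k(i,\alpha)})$, which is the definition of $k(i,\alpha)$; and $proj_{D_{k(i,\alpha)}}(f^{-1}(\Delta^i_\alpha))\cap\Delta^{k(i,\alpha)}_{\alpha'}\neq\emptyset$, which genuinely depends on $\alpha'$. There is no reason the projected preimage of $\Delta^i_\alpha$ should meet every piece $\Delta^{k(i,\alpha)}_{\alpha'}$ of the subdivision of $B_{k(i,\alpha)}$, and neither clause $(1)$ (which would require $j(k(i,\alpha),\alpha')=i$) nor a different choice of $\beta$ rescues the unconditional statement. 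So ``for every $\alpha'$'' is too strong for the second equality as written; it holds only for those $\alpha'$ with $\Delta^{k(i,\alpha)}_{\alpha'}\cap proj_{D_{k(i,\alpha)}}(f^{-1}(\Delta^i_\alpha))\neq\emptyset$. That restricted form is exactly what Proposition \ref{sobre} uses: there $\alpha_{-n-1}$ is chosen precisely so that $x_{-n-1}=proj_{D_{i_{-n-1}}}(f^{-1}x_{-n})$ lies in $\Delta^{i_{-n-1}}_{\alpha_{-n-1}}$, which places the index inside the admissible set. With the restriction on $\alpha'$ made explicit, your argument is complete.
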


\esp

We consider the subshift of finite type $\Sigma_{A}$ and observe that every $\underline{a}\in \Sigma_{A}$ defines a $(3+Lip(f|\Fc))\delta$-pseudo orbit, hence it is $C\delta$-shadowed by a $C\delta$-pseudo orbit $\underline{a}'$ that preserves \Fc. By  Lemma \ref{projectain}, the point
\begin{equation}\label{deftheta}
\theta(\underline{a}):=\Wcl[C\delta]{a'_0}\cap E_i,
\end{equation}
where $a_0\in D_i$,  is well defined and thus we have a map $\theta=\theta_{A}:\Sigma_{A}\rightarrow \cup_i E_i$. 

\esp

\begin{proposition}
The map  $\theta :\Sigma_{A}\rightarrow\cup_1^n E_i$\ is continuous.
\end{proposition}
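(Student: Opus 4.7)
The plan is to prove continuity through a compactness argument that leverages plaque expansiveness. Let $\underline{a}^{(n)}\to\underline{a}$ in $\Sigma_{A}$. For each $n$ write $\underline{p}^{(n)}=\{x^{i_k^{(n)}}_{\alpha_k^{(n)}}\}_{k\in\mathbb{Z}}$ for the $(3+Lip(f|\Fc))\delta$-pseudo-orbit associated to $\underline{a}^{(n)}$, and let $\underline{q}^{(n)}$ be a central $C\delta$-pseudo-orbit that $C\delta$-shadows $\underline{p}^{(n)}$ as produced by Theorem \ref{shadowing}, so that $\theta(\underline{a}^{(n)})=proj_{E_{i_0^{(n)}}}(q^{(n)}_0)$. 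Define $\underline{p}$ and $\underline{q}$ analogously for $\underline{a}$. The convergence $\underline{a}^{(n)}\to\underline{a}$ means that for every fixed $k$, $a^{(n)}_k=a_k$ (and hence $p^{(n)}_k=p_k$) for all sufficiently large $n$; in particular $i_0^{(n)}=i_0$ is eventually constant, and we may assume the target disc of the projection is $E_{i_0}$ throughout the sequence.

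By compactness of $M$ together with a standard diagonal extraction, pass to a subsequence along which $q^{(n)}_k\to z_k$ for every $k\in\mathbb{Z}$. Taking limits in the inequalities $d(q^{(n)}_k,p^{(n)}_k)\leq C\delta$ and $f(q^{(n)}_k)\in\Wcl[C\delta]{q^{(n)}_{k+1}}$, and using continuity of $f$ together with continuity of the center foliation (closed local center plaques depend continuously on their basepoint in Hausdorff distance), the limit sequence $\underline{z}$ is itself a central $C\delta$-pseudo-orbit which $C\delta$-shadows $\underline{p}$.

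Now both $\underline{z}$ and $\underline{q}$ are central $C\delta$-pseudo-orbits with $d(z_k,q_k)\leq 2C\delta$ for every $k$. Requiring the parameter $\delta$ of the construction to satisfy $2C\delta<e$, where $e$ is the plaque expansiveness constant of $\Fc$, plaque expansiveness forces $z_0\in\Wcl[e]{q_0}$. Since the projection $proj_{E_{i_0}}$ collapses center plaques of size at most $r_0$ to a single point, it follows that $proj_{E_{i_0}}(z_0)=proj_{E_{i_0}}(q_0)=\theta(\underline{a})$. Combining this with continuity of $proj_{E_{i_0}}$ yields $\theta(\underline{a}^{(n)})=proj_{E_{i_0}}(q^{(n)}_0)\to proj_{E_{i_0}}(z_0)=\theta(\underline{a})$ along the extracted subsequence. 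Since the same reasoning applies starting from any subsequence of $\{\underline{a}^{(n)}\}$, every subsequence of $\{\theta(\underline{a}^{(n)})\}$ has a further subsequence converging to $\theta(\underline{a})$, so the full sequence converges and $\theta$ is continuous.

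The main technical point to be careful about is that the central shadowing orbit produced by Theorem \ref{shadowing} is not unique, so the cluster point $\underline{z}$ need not equal $\underline{q}$. Plaque expansiveness is the essential ingredient that identifies $z_0$ and $q_0$ at the level of center plaques, which is then erased by the plaque-collapsing projection onto $E_{i_0}$. A secondary care point is that taking limits of the central pseudo-orbit condition requires the foliation to behave continuously with respect to basepoints, which is automatic for continuous foliations and is why we use closed plaques when passing to the limit.
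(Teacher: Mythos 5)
Your proof is correct but follows a genuinely different route from the paper's. The paper first proves a lemma that, for a converging sequence $\underline{a}^k\to\underline{a}$, one can extract a subsequence along which the central shadowing orbits $(a^k)'_n$ converge to the \emph{specific} $a'_n$ produced for $\underline{a}$; the lemma is proved by unwinding the HPS construction of the shadowing orbit (finite approximating segments plus a diagonal extraction), noting that when $\underline{a}^k$ agrees with $\underline{a}$ on a long block, the corresponding finite segments may be chosen to coincide. Continuity of $\theta$ then drops out from continuity of the projection, with precompactness of $\cup_i E_i$ closing the argument. You instead treat Theorem~\ref{shadowing} as a black box: you extract an arbitrary pointwise limit $\underline{z}$ of the shadowing orbits $\underline{q}^{(n)}$ by a diagonal argument, verify that $\underline{z}$ is again a central $C\delta$-pseudo-orbit which $C\delta$-shadows the pseudo-orbit of $\underline{a}$, and then invoke plaque expansiveness to identify $z_0$ and $q_0$ inside a single center plaque, which is then erased by $proj_{D_{i_0}}$. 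Your route is more self-contained---it does not require knowing the internal structure of the shadowing proof---and it makes explicit that plaque expansiveness is the ingredient guaranteeing the limit is the ``right'' one. The cost is the additional smallness requirement $2C\delta<e$ (together with $e\leq r_0$), which is harmless and is in any case implicitly required for $\theta$ to be well defined in the first place. Both arguments rely on the same subsequence-of-subsequences closing step.
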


\begin{proof}
We need the following lemma.

\begin{lemma}
Assume that $(\underline{a}^k)_{k\in \mathbb{N}}$\ is a sequence in $\Sigma_{A}$ that converges  to $\underline{a}$. Then there exists a subsequence $(\underline{a}^k)_{k\in S}$\ such that for every $n$,
$$ (a^k)'_n \xrightarrow[k\in S]{} a_n'.$$
\end{lemma}

\begin{proof}
We recall that the construction of $\underline{a}'$\ goes by finding for each $N>0$\ a finite pseudo-orbit $\{z_{-N}^N(\underline{a}),\ldots$ $z_0^N(\underline{a}),\ldots z_N^N(\underline{a})\}$\ which shadows the finite segment $\{a_{-N},\ldots,a_0,\ldots , a_{N}\}$\ and then passing to a converging subsequence $z_n^{N_j}(\underline{a})\xrightarrow[j\mapsto\infty]{} a_n'$ (Cf. Theorem 7A-2 in \cite{HPS}).

\esp

Now, since $\underline{a}^k\xrightarrow[k\mapsto\infty]{} \underline{a}$, there exists $k_1$\ such that $\forall k\geq k_1\  a_n^k=a_n$\ if $|n|\leq N_1$ . Thus $\forall k\geq k_1$\ we can take $$z_n^{N_1}(a^k)=z_n^{N_1}(a) \ \text{if } |n|\leq N_1 $$
By induction, we can find $k_j>k_{j-1}$\
    \begin{equation*}
    \forall k\geq k_j , \ z_n^{N_j}(a^n)=z_n^{N_j}(a) \ \text{if } |n|\leq N_j
    \end{equation*}
and from this follows $$ z_n^{N_j}(a^{k_j})\xrightarrow[j\mapsto\infty]{} a_n' $$
\end{proof}

Back to the proof of the proposition, suppose that there exist two sequences $(\underline{a}^k)_k , (\underline{b}^k)_k$\ in $\Sigma_{A}$ converging to the same limit $\underline{a}$, but with $$x=\lim_k \theta(\underline{a}^k)\neq \lim_k \theta(\underline{b}^k)=y.$$

By the previous lemma we can assume with no loss of generality that for every $n$,
\begin{equation}\label{conv1}
(a^k)'_n\xrightarrow[k\mapsto\infty]{}a'_n
\end{equation}
\begin{equation}\label{conv2}
(b^k)'_n\xrightarrow[k\mapsto\infty]{}a'_n.
\end{equation}
Consider the adapted disc $D_i$ that contains $x,y$ and denote by $proj: sat^c(D_{i})\rightarrow D_i$ the projection map: $proj$ is continuous. Thus by \eqref{conv1},\eqref{conv2}

$$x=\lim_{k\rightarrow \infty}proj((a^k)'_0)=proj(a'_0)=\lim_{k\rightarrow \infty}proj((b^k)'_0)=y,$$
contrary to what we assumed. Since $\cup_{i=1}^{N}E_i$ is pre-compact we conclude that $\theta$\ is continuous.
\end{proof}

We now study the image of the map $\theta$. First of all we have the following.

\begin{proposition}\label{sobre}
The image of $\theta$\ is compact in $\cup_1^N E_i$\ and covers $\cup_1^N B_i$.
\end{proposition}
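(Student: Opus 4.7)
Compactness of $\theta(\Sigma_A)$ is immediate: $\Sigma_A$ is compact and $\theta$ is continuous by the previous proposition. The substantive content is that this image covers $\cup_1^N B_i$, and my plan is to reverse the construction of $\theta$: given $y\in B_{i_0}$, I will build a sequence $\underline{a}\in\Sigma_A$ from the genuine $f$-orbit of $y$ and then use plaque expansiveness to identify $\theta(\underline{a})$ with $y$.

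Concretely, I construct symbols $(i_n,\alpha_n)_{n\in\mathbb{Z}}$ inductively. Choose $\alpha_0$ so that $y\in\overline{\Delta^{i_0}_{\alpha_0}}$ (possible because the $\Delta^{i_0}_\beta$ cover $B_{i_0}$). Having defined $(i_n,\alpha_n)$ for $n\geq 0$, I put $i_{n+1}:=j(i_n,\alpha_n)$, the index supplied by the subdivision procedure, so that $f(\Delta^{i_n}_{\alpha_n})\subset sat^c(B_{i_{n+1}},\delta)$; the iterate $f^{n+1}(y)$ then lies in $sat^c(B_{i_{n+1}},(1+Lip(f|\Fc))\delta)$, and by condition (3) of adapted discs it has a well-defined center projection to $B_{i_{n+1}}$, which I pick $\alpha_{n+1}$ to place in $\overline{\Delta^{i_{n+1}}_{\alpha_{n+1}}}$. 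I proceed symmetrically for $n\leq 0$ using $k(i_n,\alpha_n)$ and $f^{-1}$. Lemma \ref{combinatoria} certifies that each transition $A_{x^{i_n}_{\alpha_n},x^{i_{n+1}}_{\alpha_{n+1}}}=1$, so $\underline{a}:=(x^{i_n}_{\alpha_n})_n\in\Sigma_A$.

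To see $\theta(\underline{a})=y$, let $\underline{a}'$ be the central $C\delta$-pseudo-orbit shadowing $\underline{a}$ used in the definition of $\theta$. The construction gives $d(f^n(y),a_n)<\delta+\nu$ (a hop along a $\delta$-center plaque from $f^n(y)$ to some point of $\overline{\Delta^{i_n}_{\alpha_n}}$, followed by a jump of diameter $<\nu$), and hence $d(f^n(y),a'_n)<(1+C)\delta+\nu$ for every $n$. Both the genuine orbit $(f^n(y))_n$ and $\underline{a}'$ are central pseudo-orbits, so for $\delta,\nu$ small enough plaque expansiveness of $\Fc$ forces $y\in\Wcloc{a'_0}$. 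On the other hand $\theta(\underline{a})\in E_{i_0}\cap\Wcl[C\delta]{a'_0}\subset E_{i_0}\cap\Wcloc{a'_0}$, and condition (3) of the definition of adapted disc says that this last intersection contains at most one point. Since $y\in B_{i_0}\subset E_{i_0}$ also lies in it, we conclude $y=\theta(\underline{a})$.

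The main obstacle is the alignment of constants in the last step: the subdivision scale $\nu$, the plaque size $\delta$, the shadowing constant $C$, the local-plaque radius $r_0$ and the plaque-expansiveness constant $e$ of $\Fc$ all have to be reconciled, so that the pseudo-orbit distance $(1+C)\delta+\nu$ stays below $e$ and $W^c$-plaques of the sizes appearing above fit inside $\Wcloc{\cdot}$ where the adapted-disc transversality kicks in. This is absorbed into the choice of the $(\epsilon,\delta)$-adapted family and the subdivision made at the start of the section; the conventions $8C\delta\ll\epsilon$ and $3Lip(f)\nu<\delta$ already introduced supply enough slack, and no further adjustment is needed.
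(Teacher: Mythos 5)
There is a genuine gap in the covering part of your argument, and it is exactly the point where you write that $f^{n+1}(y)$ lies in $sat^c(B_{i_{n+1}},(1+Lip(f|\Fc))\delta)$ and, later, that $d(f^n(y),a_n)<\delta+\nu$. These bounds are only correct for the first step. Track the recursion: if $p_n$ denotes the center projection of $f^n(y)$ onto $B_{i_n}$ and $c_n$ the center-distance from $f^n(y)$ to $p_n$, then $f(p_n)$ is within $\delta$ of $B_{i_{n+1}}$, but $f^{n+1}(y)$ is only within $Lip(f|\Fc)\,c_n$ of $f(p_n)$. Hence $c_{n+1}\leq \delta+Lip(f|\Fc)\,c_n$, which gives $c_n\leq(1+Lip(f|\Fc)+\cdots+Lip(f|\Fc)^{n-1})\delta$. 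Unless $Lip(f|\Fc)<1$, which one cannot assume for a general center direction, this grows without bound, so the genuine orbit $(f^n(y))_n$ drifts arbitrarily far (along center leaves) from the transversal. Consequently the inequality $d(f^n(y),a'_n)<(1+C)\delta+\nu$ fails for large $|n|$, and plaque expansiveness cannot be invoked: its hypothesis is that the two central pseudo-orbits stay uniformly close for \emph{all} $n$.

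The paper's proof sidesteps exactly this by replacing the genuine orbit with the \emph{projected} pseudo-orbit: set $x_0=y$ and inductively $x_{n+1}=proj_{D_{j(i_n,\alpha_n)}}(fx_n)$ (and symmetrically for $n<0$). Because each $x_n$ lives on a transversal disc and the jump from $fx_n$ to $x_{n+1}$ is along a center plaque of size at most $\delta$, the sequence $\underline{x}$ is a central pseudo-orbit with jump size $O(\delta)$ uniformly in $n$; moreover $d(x_n,a_n)\leq\nu$ since $x_n$ and $a_n=x^{i_n}_{\alpha_n}$ both lie in $\Delta^{i_n}_{\alpha_n}$. Plaque expansiveness is then applied to $\underline{x}$ versus $\underline{a}'$, not to the true orbit versus $\underline{a}'$, and one concludes $y=x_0=\theta(\underline{a})$. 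Your symbol-generating recursion is essentially correct — it is the comparison sequence used in the plaque-expansiveness step that must be the projected one, not $(f^n(y))_n$.
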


\begin{proof}
The image is $\theta$\ is clearly compact due to the previous Proposition. To prove the second part we fix $x\in B_i$\ and define
an element $\underline{a}\in\Sigma_{A}$\ by the following procedure. Let $i_0,\alpha_0$ such that $x\in \Delta^{i_0}_{\alpha_0}$,
and set $a_0=x^{i_0}_{\alpha_0},x_0=x$. Now we inductively define $a_n,x_n$\ as follows.

\begin{itemize}
\item Assume first that $n>0$, and that we had determined $x_k,a_k$\ for $k=0,\ldots n$, with 
$a_n=x^{i_n}_{\alpha_n},x_n\in \Delta^{i_n}_{\alpha_n}$. Define $x_{n+1}=proj_{D_{j(i_n,\alpha_n)}}(fx_n)$ and choose $\Delta^{j(i_n,\alpha_n)}_{\alpha_{n+1}}$
containing $x_{n+1}$. Finally define $a_{n+1}=x^{j(\alpha_n)}_{\alpha_{n+1}}$. Note that $A_{a_n,a_{n+1}}=1$. 

\item For $-n<0$\ and assuming we have $a_{-k},x_{-k}$\ for $k=0,\ldots,n$\ note that $f^{-1}\Delta^{i_{-n}}_{\alpha_{-n}}\subset sat^c(B_{k(i_{-n},\alpha_{-n})},\delta)$, hence denoting $i_{-n-1}=k(i_{-n},\alpha_{-n})$ there exist $\alpha_{-n-1}$ and $x_{-n-1}\in\Delta^{i_{-n-1}}_{\alpha_{-n-1}}$ satisfying $proj_{D_{i_{-n}}}(fx_{-n-1})=x_{-n}$. Labelling $a_{-n-1}=x^{i_{-n-1}}_{\alpha_{-n-1}}$ we get by Lemma \ref{combinatoria} that $A_{a_{-n-1},a_{-n}}=1$.
\end{itemize}

In the end we have constructed two bi-infinite sequences $\underline{x},\underline{a}$ satisfying

\begin{enumerate}
\item the sequence $\underline{a}\in\Sigma_{A}$ \
\item The sequence $\underline{x}$\ is a central $(2\nu+Lip(f|\Fc))\delta$-pseudo orbit.\
\item For every $n,\  d{(a_n,x_n)}\leq \nu$.
\end{enumerate}

By plaque expansiveness we conclude $x=x_0=\theta(\underline{a})$.
\end{proof}

\esp

\begin{remark}\label{sobresymbol}
For later use we record that in the previous Proposition the sequence $\underline{a}=(x^{i_n}_{\alpha_n})_n$ constructed is such that for every $n$,
$proj(f\Delta_{x^{i_n}_{\alpha_n}})\cap \Delta_{x^{i_{n+1}}_{\alpha_{n+1}}}\neq \emptyset$. 
\end{remark}

\subsection{Rectangles and Markov Structure}

As explained in the first section, the local product structure allows us to define a bracket for points in $\cup_{i=1}^N E_i$, where for $x,y\in E_i$\ we have $\langle x,y\rangle_D=proj_{D_i}(\Wsloc{x}\cap\Wculoc{y})$. We now relate this bracket structure with the one of $\Sigma_{A}$

\begin{proposition}\label{brapre}
Let $\underline{a}\in \Sigma_{A}$ such that $a_o\in D_i$. Then

\begin{enumerate}
\item $\theta(\Wsloc{\underline{a}})\subset proj_{D_{i}}(\Wcsloc{\theta(\underline{a})}$ \
\item $\theta(\Wuloc{\underline{a}})\subset proj_{D_{i}}(\Wculoc{\theta(\underline{a})}$
\end{enumerate}
\end{proposition}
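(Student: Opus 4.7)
The plan is to reduce both statements to the plaque expansiveness of the foliations $\Fcs$ and $\Fcu$ (Lemma \ref{plaqueexpother}), applied to the central pseudo-orbits used in the definition \eqref{deftheta} of $\theta$.

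For (1), fix $\underline{b}\in W^s_{loc}(\underline{a})$, so that $a_n=b_n$ for every $n\geq 0$. Let $\underline{a}'=\{a'_n\}$ and $\underline{b}'=\{b'_n\}$ be the $C\delta$-central pseudo-orbits given by Theorem \ref{shadowing} that $C\delta$-shadow $\underline{a}$ and $\underline{b}$, respectively. For each $n\geq 0$ the triangle inequality gives
$$
d(a'_n,b'_n)\leq d(a'_n,a_n)+d(b_n,b'_n)\leq 2C\delta.
$$
Since $\Fc$ subfoliates $\Fcs$, both $\{a'_n\}_{n\geq 0}$ and $\{b'_n\}_{n\geq 0}$ are forward central-stable $C\delta$-pseudo-orbits, and they stay $2C\delta$-close. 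Choosing $\delta$ small enough that $2C\delta\leq e$ (the plaque-expansiveness constant), Lemma \ref{plaqueexpother} forces $b'_0\in \Wcsloc{a'_0}$. Projecting into $D_i$, and noting that $a'_0,b'_0\in sat^c(D_i,C\delta)$ lies well within the domain of $proj_{D_i}$ by Lemma \ref{projectain}, we obtain
$$
\theta(\underline{b})=proj_{D_i}(b'_0)\in proj_{D_i}\bigl(\Wcsloc{a'_0}\bigr).
$$
Finally, $\theta(\underline{a})=proj_{D_i}(a'_0)$ lies on the same local center leaf as $a'_0$, hence on the same local center-stable leaf, so $\Wcsloc{a'_0}$ and $\Wcsloc{\theta(\underline{a})}$ coincide in a neighborhood of $a'_0$. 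This yields $\theta(\underline{b})\in proj_{D_i}(\Wcsloc{\theta(\underline{a})})$, establishing (1).

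Part (2) is entirely analogous: if $\underline{b}\in W^u_{loc}(\underline{a})$ then $a_n=b_n$ for every $n\leq 0$, so $\{a'_n\}_{n\leq 0}$ and $\{b'_n\}_{n\leq 0}$ are backward central-unstable pseudo-orbits that are $2C\delta$-close. Applying plaque expansiveness of $\Fcu$ (which governs precisely this backward setting, equivalently the forward plaque expansiveness of $\Fcs$ for $f^{-1}$) gives $b'_0\in \Wculoc{a'_0}$, and projecting as above yields $\theta(\underline{b})\in proj_{D_i}(\Wculoc{\theta(\underline{a})})$.

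The main technical point, and the only thing to verify carefully, is that the constants are compatible: we need $2C\delta$ below the plaque-expansiveness constant $e$ of both $\Fcs$ and $\Fcu$, and we need $C\delta\leq r_0$ so that $proj_{D_i}$ is defined on the relevant center-(un)stable plaques around $a'_0$. Both requirements are consistent with the running convention $8C\delta\ll\epsilon$ fixed before Lemma \ref{projectain}, so no new smallness assumption is introduced.
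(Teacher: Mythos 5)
Your proposal is correct and follows essentially the same route as the paper: both proofs establish $d(a'_n,b'_n)\leq 2C\delta$ for $n\geq 0$ from $a_n=b_n$ and then conclude $b'_0\in\Wcsloc{a'_0}$; the paper does so by a direct citation of Theorem 6.1 of HPS, while you re-derive the same step from the paper's own Lemma \ref{plaqueexpother} on plaque expansiveness of $\Fcs$, observing that a central pseudo-orbit is automatically a center-stable one. Your version also usefully spells out the final projection step and the compatibility of constants, which the paper compresses into ``hence the result.''
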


\begin{proof}
Take $\underline{b}\in W^s_{loc}(\underline{a})$.  Since $a_n=b_n\ \forall n\geq 0$\ we have $$d(a_n',b_n')\leq 2C\delta\quad \forall n\geq 0.$$ 
 
Theorem 6.1 of \cite{HPS} implies then that $b_0'\in\Wcsloc{a_0'}$, hence the result. The second part is analogous.
\end{proof}

\esp

\begin{corollary}\label{conmuta}
The map $\theta$\ is bracket preserving.
\end{corollary}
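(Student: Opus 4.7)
The plan is to deduce this directly from Proposition \ref{brapre} together with the identification of the transverse bracket on an adapted disc with the intersection of the local stable/unstable slices $\Wst{\cdot,D_i}$ and $\Wut{\cdot,D_i}$. Recall that for $\underline{a},\underline{b}\in\Sigma_A$ with $a_0=b_0$ the symbolic bracket is the unique sequence $\underline{c}=\langle\underline{a},\underline{b}\rangle=W^s_{loc}(\underline{a})\cap W^u_{loc}(\underline{b})$, and that on the transversal side the bracket of $x,y\in E_i$ is $\langle x,y\rangle_D=proj_{D_i}(\Wsloc{x}\cap\Wculoc{y})$. I will show
$$\theta(\langle \underline{a},\underline{b}\rangle)=\langle \theta(\underline{a}),\theta(\underline{b})\rangle_{D_i}.$$

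First I would observe that, because $D_i$ is transverse to $\Fc$, for $z\in D_i$ one has $proj_{D_i}(\Wcsloc{z})=\Wcsloc{z}\cap D_i=\Wst{z,D_i}$ (any point of $\Wcsloc{z}$ is projected along a center plaque, which stays inside $\Wcsloc{z}$ by dynamical coherence), and analogously $proj_{D_i}(\Wculoc{z})=\Wut{z,D_i}$. Thus Proposition \ref{brapre} reads $\theta(\Wsloc{\underline{a}})\subset \Wst{\theta(\underline{a}),D_i}$ and $\theta(\Wuloc{\underline{a}})\subset \Wut{\theta(\underline{a}),D_i}$. Since $a_0=b_0$ the symbols $\theta(\underline{a})$ and $\theta(\underline{b})$ lie in the same $E_i$ and within $2C\delta$ of each other (both are in $\Wcl[C\delta]{a_0'}$, $\Wcl[C\delta]{b_0'}$ respectively, and $a_0'=b_0'$ modulo the choice of the shadowing orbit), so by local product structure the intersection $\Wsloc{\theta(\underline{a})}\cap\Wculoc{\theta(\underline{b})}$ is a single well-defined point and its center plaque meets $D_i$ in exactly one point.

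Applying Proposition \ref{brapre} to $\underline{a}$, since $\langle\underline{a},\underline{b}\rangle\in \Wsloc{\underline{a}}$, we get
$$\theta(\langle\underline{a},\underline{b}\rangle)\in \Wst{\theta(\underline{a}),D_i}.$$
Applying it to $\underline{b}$, since $\langle\underline{a},\underline{b}\rangle\in \Wuloc{\underline{b}}$, we similarly get
$$\theta(\langle\underline{a},\underline{b}\rangle)\in \Wut{\theta(\underline{b}),D_i}.$$
Hence $\theta(\langle\underline{a},\underline{b}\rangle)$ belongs to the intersection $\Wst{\theta(\underline{a}),D_i}\cap\Wut{\theta(\underline{b}),D_i}$.

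It remains to identify this intersection with $\langle \theta(\underline{a}),\theta(\underline{b})\rangle_{D_i}$. Write $w:=\Wsloc{\theta(\underline{a})}\cap\Wculoc{\theta(\underline{b})}$; then $\Wcloc{w}\subset\Wcsloc{\theta(\underline{a})}\cap \Wculoc{\theta(\underline{b})}$ by dynamical coherence, so its unique intersection with $D_i$ is simultaneously the single point $proj_{D_i}(w)=\langle\theta(\underline{a}),\theta(\underline{b})\rangle_{D_i}$ and a point of $\Wst{\theta(\underline{a}),D_i}\cap\Wut{\theta(\underline{b}),D_i}$. Since local product structure makes the latter a singleton too, the inclusion obtained above is actually equality, giving the claim. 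The only genuine subtlety is the verification that $\theta(\underline{a}),\theta(\underline{b})$ are close enough for the ambient bracket and local product structure to apply, which is precisely ensured by our choice $8C\delta\ll\epsilon$ in the definition of the $(\epsilon,\delta)$-adapted family.
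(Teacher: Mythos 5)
Your proof is correct and follows essentially the same route as the paper: apply Proposition \ref{brapre} to both $\underline{a}$ and $\underline{b}$ to land $\theta(\langle\underline{a},\underline{b}\rangle)$ in the intersection of a stable slice through $\theta(\underline{a})$ and an unstable slice through $\theta(\underline{b})$, then observe that by local product structure this intersection is the singleton $\{\langle\theta\underline{a},\theta\underline{b}\rangle_{D_i}\}$, forcing equality. The paper compresses this into a single displayed chain of inclusions, leaving implicit the identifications $proj_{D_i}(\Wcsloc{z})=\Wst{z,D_i}$ and $proj_{D_i}(\Wculoc{z})=\Wut{z,D_i}$ and the smallness condition $8C\delta\ll\epsilon$ that guarantees the bracket is well defined, both of which you spell out.
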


\begin{proof}
Let $\underline{a},\underline{b}\in\Sigma_{A}$\ with $a_0=b_0\in D_i$. Then $\langle\theta\underline{a},\theta \underline{b}\rangle)_{D_i}$\ is well defined and
\begin{equation*}
\begin{split}
\theta(\langle \underline{a},\underline{b}\rangle) &= \theta(W^s_{loc}(\underline{a})\cap W^u_{loc}(\underline{b}))\subset proj_{D_i}(\Wcsloc{\theta \underline{a}})\cap proj_{D_i}(\Wculoc{\theta \underline{b}})=\{\langle\theta \underline{a},\theta \underline{b}\rangle_{D_i}\}.
\end{split}
\end{equation*}
\end{proof}

\begin{definition}
For $i=1,\ldots N$, a subset $R\subset D_i$ will be called a rectangle if it is invariant under the bracket $\langle \cdot, \cdot \rangle_{D_i}$. Similarly, a subset $R'\subset \Sigma_{A}$ will be called a rectangle if it is invariant under the bracket.
\end{definition}

One verifies easily for arbitrary $x_{\alpha_0}^{i_0},\ldots x_{\alpha_r}^{i_r}\in I$ the set
\begin{equation}
[x_{\alpha_0}^{i_0},\ldots x_{\alpha_r}^{i_r}]:=\{\underline{a}\in\Sigma_{A} : a_0=x_{\alpha_0}^{i_0},\ldots a_r=x_{\alpha_r}^{i_r}\}
\end{equation}
is a compact rectangle, hence by Proposition \ref{brapre} the set 
\begin{equation}
C_{x_{\alpha_0}^{i_0},\ldots x_{\alpha_r}^{i_r}}=\theta[x_{\alpha_0}^{i_0},\ldots x_{\alpha_r}^{i_r}].
\end{equation}
is a compact rectangle as well. We denote

\begin{align}
\Wst{x,C_{x_{\alpha_0}^{i_0},\ldots x_{\alpha_r}^{i_r}}}&=\Wst{x,D_i}\cap  C_{x_{\alpha_0}^{i_0},\ldots x_{\alpha_r}^{i_r}}\\
\Wut{x,C_{x_{\alpha_0}^{i_0},\ldots x_{\alpha_r}^{i_r}}}&=\Wut{x,D_i}\cap C_{x_{\alpha_0}^{i_0},\ldots x_{\alpha_r}^{i_r}}.
\end{align}

Observe that $\emph{im}\theta=\cup_{i,\alpha} C_{x_{\alpha}^i}$, and each one of these rectangles has diameter less than equal to $2C\del$, thus 
$$A_{x^i_{\alpha},x^{i'}_{\alpha'}}=1\Rightarrow f(C_{x_{\alpha}^i})\subset sat^c(D_{i'},r_0).$$

\esp

\begin{lemma}
For $\underline{a}\in [x^i_{\alpha}]$ it holds
\begin{enumerate}
\item[a)] $\theta\left( W^s_{loc}(\underline{a}) \right)=\Wst{\theta(\underline{a}), C_{x_{\alpha}^i}}$. 
\item[b)] $\theta\left( W^u_{loc}(\underline{a}) \right)=\Wut{\theta(\underline{a}), C_{x_{\alpha}^i}}$.
\end{enumerate}
\end{lemma}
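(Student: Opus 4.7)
My plan is to prove each containment using the bracket structure together with Proposition \ref{brapre} and Corollary \ref{conmuta}. By symmetry it suffices to treat (a); part (b) is identical after interchanging the roles of stable/unstable sets and stable/unstable local sets in $\Sigma_A$.

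For the forward inclusion $\theta(W^s_{loc}(\underline{a}))\subset \Wst{\theta(\underline{a}),C_{x^i_\alpha}}$, I take any $\underline{b}\in W^s_{loc}(\underline{a})$. Since $b_n=a_n$ for all $n\geq 0$, in particular $b_0=a_0=x^i_\alpha$, so $\underline{b}\in [x^i_\alpha]$ and therefore $\theta(\underline{b})\in C_{x^i_\alpha}$. On the other hand, Proposition \ref{brapre}(1) yields $\theta(\underline{b})\in proj_{D_i}(\Wcsloc{\theta(\underline{a})})\subset \Wst{\theta(\underline{a}),D_i}$, the second inclusion being the observation that the projection of a local center-stable plaque onto a transverse adapted disc is exactly the stable trace on that disc. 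Intersecting these two memberships gives $\theta(\underline{b})\in \Wst{\theta(\underline{a}),D_i}\cap C_{x^i_\alpha}=\Wst{\theta(\underline{a}),C_{x^i_\alpha}}$.

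For the reverse inclusion, fix $z\in \Wst{\theta(\underline{a}),C_{x^i_\alpha}}$. By definition of $C_{x^i_\alpha}=\theta[x^i_\alpha]$ there is $\underline{c}\in [x^i_\alpha]$ with $\theta(\underline{c})=z$. Since $c_0=x^i_\alpha=a_0$, the bracket $\underline{b}=\langle \underline{a},\underline{c}\rangle$ is defined and lies, by construction, in $W^s_{loc}(\underline{a})\cap W^u_{loc}(\underline{c})$; in particular $\underline{b}\in W^s_{loc}(\underline{a})$. Applying Corollary \ref{conmuta} gives
\[
\theta(\underline{b})=\langle \theta(\underline{a}),\theta(\underline{c})\rangle_{D_i}=\langle \theta(\underline{a}),z\rangle_{D_i}.
\]
To finish the argument I must check that $\langle \theta(\underline{a}),z\rangle_{D_i}=z$. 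From the proof of Corollary \ref{conmuta}, $\langle \theta(\underline{a}),z\rangle_{D_i}$ is the unique point of $D_i$ lying in $proj_{D_i}(\Wcsloc{\theta(\underline{a})})\cap proj_{D_i}(\Wculoc{z})$, i.e.\ in $\Wst{\theta(\underline{a}),D_i}\cap\Wut{z,D_i}$. Now $z\in \Wst{\theta(\underline{a}),D_i}$ by hypothesis, so $\Wst{z,D_i}=\Wst{\theta(\underline{a}),D_i}$, and trivially $z\in \Wut{z,D_i}$; hence $z$ itself is in that intersection, and by uniqueness (the local product structure makes $\Psi^{D_i}_{\theta(\underline{a})}$ a homeomorphism) we get $\langle \theta(\underline{a}),z\rangle_{D_i}=z$. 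Thus $\theta(\underline{b})=z\in \theta(W^s_{loc}(\underline{a}))$, completing (a).

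The main obstacle I anticipate is the clean identification $proj_{D_i}(\Wcsloc{y})=\Wst{y,D_i}$ and the uniqueness of the bracket-point on $D_i$: both are really statements about the local product structure near the adapted disc, and they are used somewhat implicitly in the proof of Corollary \ref{conmuta}. Once those are in hand, part (b) is obtained by repeating the argument with $\underline{b}=\langle \underline{c},\underline{a}\rangle\in W^u_{loc}(\underline{a})$ and Proposition \ref{brapre}(2), so no essentially new ideas are required.
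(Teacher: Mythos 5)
Your proposal is correct and follows essentially the same route as the paper: the forward inclusion is Proposition \ref{brapre} combined with the fact that $\underline{b}\in[x^i_\alpha]$, and the reverse inclusion uses the bracket $\langle\underline{a},\underline{c}\rangle$ together with Corollary \ref{conmuta} and the observation that $\langle\theta(\underline{a}),z\rangle_{D_i}=z$ when $z$ already lies on the stable trace of $\theta(\underline{a})$ in $D_i$. The only difference is that you make explicit the identification $proj_{D_i}(\Wcsloc{y})=\Wst{y,D_i}$ and the uniqueness coming from the coordinate chart $\Psi^{D_i}$, which the paper leaves implicit.
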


\begin{proof}
By proposition \ref{brapre},  $\theta\left(  W^s_{loc}(\underline{a}) \right)\subset\Wst{\theta(\underline{a}),C_{x_{\alpha}^i}}$. Consider a point $x\in$ $\Wst{\theta(\underline{a}),C_{x_{\alpha}^i}}$\ and take $\underline{b}\in [x_{\alpha}^i]$\ such that $\theta(\underline{b})=x$. Now $\underline{c}=\langle \underline{a},\underline{b}\rangle \in W^s_{loc}(\underline{a})$\ and $$ \theta(\underline{c})=\langle\theta(\underline{a}),\theta (\underline{b})\rangle=\langle\theta( \underline{a}),x\rangle =x$$
since $x$\ in $\Wst{\theta \underline{a},D_i}$. Therefore $x\in\theta(W^s_{loc}(\underline{a}))$. The second part is similar.
\end{proof}

Next we analyse the behaviour of $\theta$ with respect to the shift dynamics: it will be established below that the natural Markov structure of the rectangles $[x^i_{\alpha}]$ with respect to $\sigma$ induces a similar property on $\emph{im}\theta$. First note:

\begin{lemma}\label{dynrectangulo}
Consider $\underline{a}\in [x^i_\alpha,x^{i'}_{\alpha'}]$. Then $proj_{D_{i'}}(f\theta\underline{a})=\theta\sigma\underline{a}$. 
\end{lemma}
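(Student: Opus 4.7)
The plan is to exploit the compatibility between shadowing pseudo-orbits for $\underline{a}$ and $\sigma\underline{a}$, and then use the $f$-invariance of $\Fc$ to transport this compatibility across the dynamics. Pick a central $C\delta$-pseudo-orbit $\underline{a}'=(a'_n)$ shadowing $\underline{a}$ (Theorem \ref{shadowing}), so that by definition $\theta\underline{a}=\Wcl[C\delta]{a'_0}\cap E_i$. The shifted sequence $(a'_{n+1})_n$ is itself a central $C\delta$-pseudo-orbit, and it $C\delta$-shadows $\sigma\underline{a}$; since $a_1=x^{i'}_{\alpha'}\in B_{i'}$ and $d(a_1,a'_1)\leq C\delta$, Lemma \ref{projectain} guarantees that the intersection
$$
\theta\sigma\underline{a}=\Wcl[C\delta]{a'_1}\cap E_{i'}
$$
is a single, well-defined point.

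For the other side, $\theta\underline{a}\in\Wcl[C\delta]{a'_0}\subset\Wcloc{a'_0}$ by construction. The $f$-invariance of $\Fc$ gives $f(\theta\underline{a})\in\Wc{fa'_0}$, and the Lipschitz estimate on $f|\Fc$ combined with the central pseudo-orbit condition $fa'_0\in\Wcl[C\delta]{a'_1}$ places $f(\theta\underline{a})$ in a center plaque around $a'_1$ of radius at most $(1+Lip(f|\Fc))C\delta$. By the convention on $\delta$ fixed in the previous subsection, this plaque is contained in $\Wcloc{a'_1}$, so $proj_{D_{i'}}$ is defined at $f(\theta\underline{a})$ and
$$
proj_{D_{i'}}(f\theta\underline{a})=\Wcloc{f\theta\underline{a}}\cap D_{i'}=\Wcloc{a'_1}\cap D_{i'}.
$$

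To finish, I identify this intersection with $\theta\sigma\underline{a}$. Since $E_{i'}\subset D_{i'}$ and an adapted disc meets any center plaque of size $\leq r_0$ in at most one point, both $\Wcloc{a'_1}\cap D_{i'}$ and $\Wcl[C\delta]{a'_1}\cap E_{i'}$ are the unique point where the center leaf through $a'_1$ crosses $D_{i'}$; hence they agree.

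The main subtlety is that the map $\theta$ is defined via a choice of shadowing pseudo-orbit, which is not canonical. Plaque expansiveness ensures that any two such choices remain in a common small center plaque, so their projections to $E_i$ coincide and $\theta$ is unambiguous. Using $\underline{a}'$ for $\underline{a}$ and its shift $(a'_{n+1})_n$ for $\sigma\underline{a}$ sidesteps this issue, since the argument above only requires that the two shadowing orbits differ by a time shift and that $f$ sends center plaques into center plaques.
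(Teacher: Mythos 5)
Your proof is correct and follows essentially the same route as the paper's: shift the shadowing pseudo-orbit $\underline{a}'$ to obtain a central pseudo-orbit for $\sigma\underline{a}$, observe that $f(\theta\underline{a})$ lies in the local center plaque of $a'_1$, and identify the two expressions via the transversality of the adapted disc. You simply spell out a few steps (the Lipschitz bound placing $f\theta\underline{a}$ in $\Wcloc{a'_1}$, the appeal to Lemma~\ref{projectain} for well-definedness, and the uniqueness of the intersection with $D_{i'}$) that the paper leaves implicit.
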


\begin{proof}
Let $x=\theta(\underline{a})$. Observe that the sequence $(b_n:=a_{n+1}')_{n\in\mathbb{Z}}$\ $C\delta$-shadows $\sigma (\underline{a})$, and since it respects the center foliation we conclude $$\theta(\sigma\underline{a})=proj_{D_{i'}}(a_1'),$$
$$
fx\in \Wcloc{a_1'}
$$
hence the result. 
\end{proof}

\esp

\begin{proposition}\label{markovseq} Let $\underline{a}\in \Sigma_{A}$ and consider  $x=\theta(\underline{a}) , y=\theta\sigma\underline{a}, z=\theta\sigma^{-1}\underline{a}$. If $i,i',i''$ are such that $x\in D_i,y\in D_{i'},z\in D_{i''}$, then

\begin{align}
proj_{D_{i'}}(f\Wst{x,C_{a_0}})&\subset\Wst{y,C_{a_1}} \\
proj_{D_{i}}(f\Wut{z,C_{a_{-1}}})&\supset\Wut{x,C_{a_0}}
\end{align}
\end{proposition}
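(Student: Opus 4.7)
The plan is to transfer the statement to the symbolic side via $\theta$, where both the rectangles $C_{a_n}$ and the local stable/unstable pieces are easy to manipulate, and then translate back using Lemma \ref{dynrectangulo}.

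For the first inclusion, I take an arbitrary $x'\in \Wst{x,C_{a_0}}$. By the preceding lemma, $\Wst{x,C_{a_0}}=\theta(W^s_{loc}(\underline{a}))$, so there exists $\underline{b}\in W^s_{loc}(\underline{a})$ with $\theta(\underline{b})=x'$. Because $b_n=a_n$ for every $n\geq 0$, the shifted sequence $\sigma\underline{b}$ satisfies $(\sigma\underline{b})_n=(\sigma\underline{a})_n$ for every $n\geq 0$, that is, $\sigma\underline{b}\in W^s_{loc}(\sigma\underline{a})$. Noting that $\sigma\underline{a}\in [a_1]=[x^{i'}_{\alpha'}]$ and applying Lemma \ref{dynrectangulo} to $\underline{b}$ (which lies in $[a_0,a_1]$), we get
\[
proj_{D_{i'}}(fx')=proj_{D_{i'}}(f\theta\underline{b})=\theta\sigma\underline{b}\in\theta(W^s_{loc}(\sigma\underline{a}))=\Wst{\theta\sigma\underline{a},C_{a_1}}=\Wst{y,C_{a_1}},
\]
which is exactly the required containment.

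For the second inclusion I argue dually. Fix $x'\in\Wut{x,C_{a_0}}=\theta(W^u_{loc}(\underline{a}))$ and choose $\underline{b}\in W^u_{loc}(\underline{a})$ with $\theta(\underline{b})=x'$. From $b_n=a_n$ for all $n\leq 0$ it follows that $\sigma^{-1}\underline{b}\in W^u_{loc}(\sigma^{-1}\underline{a})$. Setting $z':=\theta(\sigma^{-1}\underline{b})$, the preceding lemma applied to $\sigma^{-1}\underline{a}\in[a_{-1}]$ gives $z'\in \Wut{z,C_{a_{-1}}}$. Applying Lemma \ref{dynrectangulo} this time to $\sigma^{-1}\underline{b}\in[a_{-1},a_0]$ yields
\[
proj_{D_i}(fz')=proj_{D_i}(f\theta\sigma^{-1}\underline{b})=\theta\underline{b}=x',
\]
so $x'\in proj_{D_i}(f\Wut{z,C_{a_{-1}}})$, as desired.

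The only step that requires any care is checking that the shift sends local stable/unstable fibers to local stable/unstable fibers of the image point; this is just unfolding the combinatorial definitions of $W^s_{loc},W^u_{loc}$ in $\Sigma_A$ and is immediate. Everything else is a direct application of the two lemmas already established, so no genuine obstacle is expected in the argument — the content of the statement is really packaged into the fact that $\theta$ intertwines $\sigma$ with $proj\circ f$ (Lemma \ref{dynrectangulo}) and respects the stable/unstable product structure (Proposition \ref{brapre}).
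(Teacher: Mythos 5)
Your argument is correct and is essentially the paper's own proof, simply written out pointwise rather than at the level of sets: you invoke the same two ingredients — the identification $\theta(W^s_{loc}(\underline{a}))=\Wst{\theta\underline{a},C_{a_0}}$ (and its unstable analogue) from the unnumbered lemma, and the intertwining $proj_{D_{i'}}\circ f\circ\theta=\theta\circ\sigma$ from Lemma~\ref{dynrectangulo} — together with the combinatorial observation $\sigma(W^s_{loc}(\underline{a}))\subset W^s_{loc}(\sigma\underline{a})$ (resp.\ $\sigma^{-1}(W^u_{loc}(\underline{a}))\subset W^u_{loc}(\sigma^{-1}\underline{a})$). No gap; the shift of viewpoint from set composition to chasing an element $\underline{b}$ is cosmetic.
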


\begin{proof}
This is an immediate consequence of the two previous lemmas:
\begin{equation*}
\begin{split} proj_{D_{i'}}(f\Wst{x,C_{a_0}})& =proj_{D_{i'}}(f\Theta(W^s_{loc}(\underline{a})))=
\theta\sigma W^s_{loc}(\underline{a}) \\ 
&\subset\theta(W^s_{loc}(\sigma \underline{a}))=\Wst{y,C_{a_1}},
\end{split}
\end{equation*}
and likewise for the unstable part.
\end{proof}

\esp

\subsection{Bowen's refinement procedure.}  The reader may have observed before that we overdetermined the possible transitions determined by $A^1$. We did so in order to guarantee that every $\Delta^i_{\alpha}$ is contained in the image of $\phi$ (Cf. Proposition \ref{sobre}). There appears to be a trade-off between this ``surjectivity'' and the the over determinacy, related to the fact that we are using projections along centers, and those are difficult to control.

Now we are going to eliminate some unnecessary redundancy using a method of R. Bowen (compare \cite{Shub} pages 133-134) to cut the rectangles $C_{x^i_{\alpha}}$ into disjoint sub-rectangles, while maintaining the Markov property. For rectangles $C_{x^i_{\alpha}},C_{x^i_{\beta}}$ with non-empty intersection, define:

\begin{align*}
C_{x^i_{\alpha}:x^i_{\beta}}^1=\{x\in C_{x^i_{\alpha}}:\Wst{x,C_{x^i_{\alpha}}}\cap C_{x^i_{\beta}}\neq \emptyset,
\Wut{x,C_{x^i_{\alpha}}}\cap C_{x^i_{\beta}}\neq \emptyset\} \\
C_{x^i_{\alpha}:x^i_{\beta}}^2=\{x\in C_{x^i_{\alpha}}:\Wst{x,C_{x^i_{\alpha}}}\cap C_{x^i_{\beta}}\neq \emptyset,
\Wut{x,C_{x^i_{\alpha}}}\cap C_{x^i_{\beta}}= \emptyset\} \\
C_{x^i_{\alpha}:x^i_{\beta}}^3=\{x\in C_{x^i_{\alpha}}:\Wst{x,C_{x^i_{\alpha}}}\cap C_{x^i_{\beta}}= \emptyset, 
\Wut{x,C_{x^i_{\alpha}}}\cap C_{x^i_{\beta}}\neq \emptyset\} \\
C_{x^i_{\alpha}:x^i_{\beta}}^4=\{x\in C_{x^i_{\alpha}}:\Wst{x,C_{x^i_{\alpha}}}\cap C_{x^i_{\beta}}= \emptyset,
\Wut{x,C_{x^i_{\alpha}}}\cap C_{x^i_{\beta}}= \emptyset\}. 
\end{align*}

It is not too hard to show that each $C_{x^i_{\alpha}:x^i_{\beta}}^m, m=1,\ldots, 4$ is a rectangle, and furthermore
\begin{enumerate}
\item[i)] $C_{x^i_{\alpha}:x^i_{\beta}}^1=C_{x^i_{\alpha}}\cap C_{x^i_{\beta}}$,
\item[ii)] the sets $C_{x^i_{\alpha}:x^i_{\beta}}^1,C_{x^i_{\alpha}:x^i_{\beta}}^1\cup C_{x^i_{\alpha}:x^i_{\beta}}^2,C_{x^i_{\alpha}:x^i_{\beta}}^1\cup C_{x^i_{\alpha}:x^i_{\beta}}^3$ are closed in $\cup_{i} D_{i}$, and
\item[iii)] $\cup_{m=1}^4  C_{x^i_{\alpha}:x^i_{\beta}}^m= C_{x^i_{\alpha}}$.
\end{enumerate}

Now define
\begin{align*}
R_{x^i_{\alpha}:x^i_{\beta}}^{m}&=cl\ C_{x^i_{\alpha}:x^i_{\beta}}^m,\ m=1,\ldots, 4\\
F_{x^i_{\alpha}:x^i_{\beta}}&=\cup_{m=1}^4 \partial C_{x^i_{\alpha}:x^i_{\beta}}^m,
\end{align*}
where the closure and the boundary are taken with respect to the induced topology of the disc $D_{i}$. We have that $\{R_{x^i_{\alpha}:x^i_{\beta}}^{m}\}_{m=1}^4$ is a covering of $C_{x^i_{\alpha}}$ by closed rectangles whose interiors (with respect to the induced topology)\ are disjoint. See Lemma 10.24 in \cite{Shub}.

\esp
Consider the set 
\begin{equation*}
\mathcal{N}:=\emph{im}\theta\setminus \bigcup_{i,\alpha,\beta}F_{x^i_{\alpha}:x^i_{\beta}},
\end{equation*}
which is relatively open and also dense in $im\theta$, due to Baire's catetegory theorem. For $x\in \mathcal{N}$ define:

\begin{align*}
\mathcal{J}(x)&=\{C_{x^i_{\alpha}}:x\in C_{x^i_{\alpha}}\}\\
\mathcal{J}^{*}(x)&=\{C_{x^i_{\beta}}:\exists C_{x^i_{\alpha}}\in\mathcal{J}(x), C_{x^i_{\alpha}}\cap C_{x^i_{\beta}}\neq\emptyset\}\\
P(x)&=\bigcap\{int\, R_{x^i_{\alpha}:x^i_{\beta}}^m:  C_{x^i_{\alpha}}\in\mathcal{J}(x), C_{x^i_{\beta}}\in\mathcal{J}^{*}(x),
x \in C_{x^i_{\alpha}:x^i_{\beta}}^m\}.
\end{align*}

\esp

\begin{lemma}\label{rectanglePx}
Each $P(x)$\ is an open rectangle and  $x,x'\in \mathcal{N},\ P(x)\cap P(x')\neq\emptyset \Rightarrow P(x)=P(x')$. In particular there are finitely many rectangles $P(x)$.
\end{lemma}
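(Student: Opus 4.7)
The plan is to verify three assertions: $P(x)$ is a nonempty open rectangle containing $x$; any two such sets are either disjoint or identical; and there are only finitely many. Openness is immediate since $P(x)$ is a finite intersection of open subsets of $\bigcup_i D_i$. The containment $x\in P(x)$ holds because, for each term $\text{int}\,R^m_{x^i_{\alpha}:x^i_{\beta}}$ in the intersection, we have $x\in C^m_{x^i_{\alpha}:x^i_{\beta}}$ by the defining property and $x\in\mathcal{N}$ precludes $x\in\partial C^m_{x^i_{\alpha}:x^i_{\beta}}$; hence $x\in\text{int}\,C^m\subseteq\text{int}\,R^m$. The rectangle property follows from the facts that each $R^m$ is a (closed) rectangle, that the interior of a rectangle in $D_i$ remains a rectangle (use the product-structure coordinates $\Psi_p^{D_i}$, under which a rectangle corresponds to a product and its interior to the product of interiors), and that a nonempty intersection of rectangles in a common disc is itself closed under $\langle\cdot,\cdot\rangle_{D_i}$.

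For the uniqueness part, assume $x,x'\in\mathcal{N}$ with $P(x)\cap P(x')\neq\emptyset$. Since the discs $D_i$ are pairwise disjoint, both sets lie in a common $D_i$. The intersection is open in $D_i$, and as a subset of $\text{im}\,\theta$ it is open in $\text{im}\,\theta$; using the density of $\mathcal{N}$ in $\text{im}\,\theta$ noted after the definition, we may pick $y\in P(x)\cap P(x')\cap\mathcal{N}$.

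The key step is to show $\mathcal{J}(x)=\mathcal{J}(x')$. Suppose for contradiction that some $C_{x^i_{\alpha}}\in\mathcal{J}(x)\setminus\mathcal{J}(x')$, and pick any $C_{x^i_{\alpha'}}\in\mathcal{J}(x')$ (nonempty because $x'\in\text{im}\,\theta$). Since $R^1_{x^i_{\alpha}:x^i_{\alpha}}=C_{x^i_{\alpha}}$ and similarly for $\alpha'$, the containments $P(x)\subseteq\text{int}\,C_{x^i_{\alpha}}$ and $P(x')\subseteq\text{int}\,C_{x^i_{\alpha'}}$ give $y\in C_{x^i_{\alpha}}\cap C_{x^i_{\alpha'}}$; in particular $C_{x^i_{\alpha}}\in\mathcal{J}^*(x')$. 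Because $x'\in C_{x^i_{\alpha'}}\setminus C_{x^i_{\alpha}}$, the unique $m$ with $x'\in C^m_{x^i_{\alpha'}:x^i_{\alpha}}$ lies in $\{2,3,4\}$, and from $P(x')\subseteq\text{int}\,R^m_{x^i_{\alpha'}:x^i_{\alpha}}$ we obtain $y\in\text{int}\,R^m_{x^i_{\alpha'}:x^i_{\alpha}}$. But $y$ lies in $C^1_{x^i_{\alpha'}:x^i_{\alpha}}=C_{x^i_{\alpha}}\cap C_{x^i_{\alpha'}}\subseteq R^1_{x^i_{\alpha'}:x^i_{\alpha}}$, and since the interiors of $R^1$ and $R^m$ are disjoint we conclude $y\in R^1\setminus\text{int}\,R^1=\partial R^1=\partial C^1\subseteq F$ (the last equality uses that $C^1$ is closed, so $R^1=C^1$), contradicting $y\in\mathcal{N}$. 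Hence $\mathcal{J}(x)=\mathcal{J}(x')$ and consequently $\mathcal{J}^*(x)=\mathcal{J}^*(x')$. For each remaining pair $(\alpha,\beta)$, if the unique $m,m'$ with $x\in C^m_{x^i_{\alpha}:x^i_{\beta}}$ and $x'\in C^{m'}_{x^i_{\alpha}:x^i_{\beta}}$ differed, then $y$ would lie in the empty set $\text{int}\,R^m\cap\text{int}\,R^{m'}$. So $m=m'$, the defining collections for $P(x)$ and $P(x')$ agree, and therefore $P(x)=P(x')$.

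Finiteness is then a counting observation: $P(x)$ depends only on the combinatorial data $(\mathcal{J}(x),\{m_{\alpha,\beta}(x)\})$, which takes only finitely many values. The main obstacle is the density argument used to extract $y\in\mathcal{N}$; without guaranteeing $y\notin F$ one cannot exclude $y\in\partial C^1$, and the otherwise clean collision $y\in R^1\cap\text{int}\,R^m$ with $m\neq 1$ fails to produce a contradiction.
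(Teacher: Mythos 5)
Your proof is correct and follows essentially the same route as the paper: reduce, via the density of $\mathcal{N}$ in $\mathrm{im}\,\theta$, to a point of $\mathcal{N}$ in the relevant intersection, then show $\mathcal{J}(x)=\mathcal{J}(x')$ by exploiting that the $R^m_{x^i_\alpha:x^i_\beta}$ have disjoint interiors and that $\partial C^1\subseteq F$ is avoided by points of $\mathcal{N}$. The only cosmetic difference is organizational: the paper reduces directly to the case $x'\in P(x)\cap\mathcal{N}$ and shows $\mathcal{J}(x)\subseteq\mathcal{J}(x')$ immediately before handling the reverse inclusion with the ``checking the possibilities'' argument, whereas you keep $x,x'$ general, extract a third point $y\in P(x)\cap P(x')\cap\mathcal{N}$, and run the same contradiction through $y$; your version also spells out the step the paper leaves to the reader.
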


\begin{proof}
It follows easily that each $P(x)$ is an open rectangle. Since $\mathcal{N}$\ is dense in $\emph{im}\theta$, it
it suffices to show that  $$x'\in P(x)\cap \mathcal{N}\Rightarrow P(x)=P(x')$$
and for this it will be enough to show $\mathcal{J}(x)=\mathcal{J}(x')$, as the rectangles $C_{x^i_{\alpha}:x^i_{\beta}}^m$\ have disjoint interiors.

It is immediate that $\mathcal{J}(x)\subset\mathcal{J}(x')$; for the other inclusion take $C_{x^i_{\beta}}\ni x'$ and consider $C_{x^i_{\alpha}}\in\mathcal{J}(x)$\  such that
$C_{x^i_{\alpha}}\cap C_{x^i_{\beta}}\neq\emptyset$. By checking the possibilities, one sees that necessarily $x\in C_{x^i_{\alpha}:x^i_{\beta}}^1=C_{x^i_{\alpha}}\cap C_{x^i_{\beta}}$, and thus  $x\in  C_{x^i_{\beta}}$\ as well, i.e. $\mathcal{J}(y)\subset\mathcal{J}(x)$.
\end{proof}

\esp
We now discuss the dynamics of the rectangles $P(x)$.

\begin{definition}
We say that $D_i'$ is an allowed future for $P(x)\subset D_i$ if there exists $\alpha,\alpha'$ such that $C_{x^i_{\alpha}}\in \mathcal{J}(x)$ and $A_{x^i_{\alpha},x^{i'}_{\alpha'}}=1$.
\end{definition}

\begin{lemma}\label{rectangulosB}
Suppose that $x\in C_{x^i_{\alpha},x^{i'}_{\alpha'}}$, and $y=proj_{D_i'}(fx)\in C_{x^{i'}_{\beta}}$. Then there exists $\underline{b}\in\Sigma_A$ satisfying
\begin{enumerate}
\item $\underline{b}\in [x^i_{\alpha},x^{i'}_{\beta}]$,
\item $\theta(\underline{b})=x, \theta(\sigma\underline{b})=y$.
\end{enumerate}
\end{lemma}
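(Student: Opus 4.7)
The plan is to splice $\underline{a}$ and $\underline{c}$ together. By hypothesis $x\in C_{x^i_\alpha,x^{i'}_{\alpha'}}=\theta([x^i_\alpha,x^{i'}_{\alpha'}])$ furnishes some $\underline{a}\in[x^i_\alpha,x^{i'}_{\alpha'}]$ with $\theta(\underline{a})=x$, and similarly $y\in C_{x^{i'}_\beta}=\theta([x^{i'}_\beta])$ furnishes $\underline{c}\in[x^{i'}_\beta]$ with $\theta(\underline{c})=y$. I will then define
$$
b_n=\begin{cases} a_n & n\le 0\\ c_{n-1} & n\ge 1,\end{cases}
$$
which automatically satisfies $b_0=x^i_\alpha$, $b_1=x^{i'}_\beta$, and check in turn that $\underline{b}\in\Sigma_A$ and that $\theta(\underline{b})=x$, $\theta(\sigma\underline{b})=y$.

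For admissibility, the only non-inherited transition is $A_{x^i_\alpha,x^{i'}_\beta}$. The key observation here is that neither clause of the definition of $A$ actually constrains the second sub-index: clause (1) only mentions $\Delta^i_\alpha$ and $B_{i'}$, while clause (2) existentially quantifies over $\Delta^{i'}_\bullet$. Consequently $A_{x^i_\alpha,x^{i'}_\gamma}$ depends only on the triple $(i,\alpha,i')$, and since $A_{x^i_\alpha,x^{i'}_{\alpha'}}=1$ we get $A_{x^i_\alpha,x^{i'}_\beta}=1$ for free. This delivers item (1), $\underline{b}\in[x^i_\alpha,x^{i'}_\beta]$.

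For item (2), the splicing gives $\underline{b}\in W^u_{loc}(\underline{a})$ (agreement for $n\le 0$) and $\sigma\underline{b}\in W^s_{loc}(\underline{c})$ (since $(\sigma\underline{b})_n=c_n$ for every $n\ge 0$). Two applications of Proposition \ref{brapre} yield
$$
\theta(\underline{b})\in \text{proj}_{D_i}(\Wculoc{x}),\qquad \theta(\sigma\underline{b})\in \text{proj}_{D_{i'}}(\Wcsloc{y}).
$$
On the other hand, Lemma \ref{dynrectangulo} gives $\theta(\sigma\underline{b})=\text{proj}_{D_{i'}}(f\theta(\underline{b}))$; using the $f$-invariance of $\Fcu$ together with the fact that $\text{proj}_{D_{i'}}$ moves along center leaves (which are contained in center-unstable leaves), this point also lies in $\text{proj}_{D_{i'}}(\Wculoc{y})$. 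The local product structure on $D_{i'}$, valid at the chosen scale because $8C\delta\ll\epsilon$, then collapses the intersection of the center-stable and center-unstable transversals through $y$ to $\{y\}$, forcing $\theta(\sigma\underline{b})=y$. Finally, $\text{proj}_{D_{i'}}(f\theta(\underline{b}))=y=\text{proj}_{D_{i'}}(fx)$ puts $fx$ and $f\theta(\underline{b})$ in a common local center plaque; applying $f^{-1}$ (using $f$-invariance of $\Fc$) and using that $D_i$ is transverse to $\Fc$, we conclude $\theta(\underline{b})=x$.

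The step I expect to be the main obstacle is actually the admissibility check: it hinges on the slightly subtle observation that the second sub-index of $A$ carries no information. Without this, the splicing might leave $\Sigma_A$ and one would be forced to either modify $\underline{c}$ (e.g.\ pre-bracketing to align $c_0$ with $x^{i'}_{\alpha'}$) or insert intermediate coordinates, neither of which is obviously available from the hypotheses. The geometric portion, in contrast, is essentially a mechanical combination of Proposition \ref{brapre} with Lemma \ref{dynrectangulo} and the local product structure, once the splicing has been written down.
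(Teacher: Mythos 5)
Your splicing of $\underline{a}$ and $\underline{c}$ is exactly what the paper does, and your observation that neither clause of the definition of $A$ constrains the second sub-index supplies the justification for $A_{x^i_\alpha,x^{i'}_\beta}=1$ that the paper asserts without comment. The only (cosmetic) divergence is in the final collapse: the paper intersects $\Wcsloc{x}$ with $\Wut{x,C_{a_0}}$ directly inside $D_i$, whereas you first collapse to $\theta(\sigma\underline{b})=y$ inside $D_{i'}$ and then pull back through $proj_{D_{i'}}\circ f$ using the adapted-disc property of $D_i$ — both are sound applications of the same local product structure.
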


\begin{proof}
Observe that by definition of $A$ one has $A_{x^i_{\alpha},x^{i'}_{\beta}}=1$. By hypothesis there exist $\underline{a},\underline{c}\in\Sigma_A$ such that
\begin{enumerate}
\item[i)] $a_0=x^i_{\alpha},c_0=x^{i'}_{\beta}$,
\item[ii)]$\theta(\underline{a})=x,\theta(\underline{c})=y$.
\end{enumerate}
Consider the sequence defined by $$ b_n=\begin{cases}
a_n&\text{if }n\leq 0 \\
c_{n-1}&\text{if }n\geq 1,
\end{cases}$$
and observe that $\underline{b}\in\Sigma_A$. Let $z=\theta\underline{b}$: we claim that $z=x$.  To see this observe that since $\underline{b}\in W^u_{loc}(\underline{a}), z\in\Wut{x,C_{a_0}}$. Furthermore $\sigma \underline{b} \in  W^s_{loc}(\underline{c})$, so we get $proj_{D_{i'}}(fz)\in\Wst{y,C_{c_0}}$ and thus $fz\in \Wcs{fx}$. We conclude  $z\in\Wcsloc{x}\cap\Wut{x,C_{a_0}}=\{x\}$. 
\end{proof}

\begin{proposition}\label{dynamicaP}
Let $x,x'\in \mathcal{N}\cap D_i$ be such that
\begin{enumerate}
\item $P(x)=P(x')$.
\item $x'\in \Wst{x,D_{i}}$.
\item $i'$ is an allowed future for $P(x)$.
\end{enumerate}
Consider the points $y=proj_{D_{i'}}(fx),y'=proj_{D_{i'}}(fx')$, and suppose that $y,y'\in\mathcal{N}$. Then $P(y)=P(y')$
\end{proposition}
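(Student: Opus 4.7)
The plan is to invoke Lemma \ref{rectanglePx}, which (in light of its proof) reduces $P(y)=P(y')$ to the set equality $\mathcal{J}(y)=\mathcal{J}(y')$. I will establish the inclusion $\mathcal{J}(y)\subseteq \mathcal{J}(y')$; the reverse follows by swapping $x$ and $x'$, since stable-relatedness is symmetric and the equality $P(x')=P(x)$ makes $i'$ equally an allowed future for $P(x')$.

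Fix $C_{x^{i'}_\beta}\in \mathcal{J}(y)$. The allowed-future hypothesis, combined with $\mathcal{J}(x)=\mathcal{J}(x')$, supplies some $C_{x^i_\alpha}\in \mathcal{J}(x)=\mathcal{J}(x')$ and an index $\alpha''$ with $A_{x^i_\alpha,x^{i'}_{\alpha''}}=1$. A direct inspection of the definition of $A$ shows that both of its defining clauses depend only on the pair $(\alpha,i')$ and not on the second fiber index, so automatically $A_{x^i_\alpha,x^{i'}_\beta}=1$ for the $\beta$ at hand.

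The crucial technical step is to construct a sequence $\underline{b}\in[x^i_\alpha,x^{i'}_\beta]$ with $\theta(\underline{b})=x$ and $\theta(\sigma\underline{b})=y$. I paste: pick $\underline{e}\in\Sigma_A$ with $e_0=x^i_\alpha$, $\theta(\underline{e})=x$ (available since $C_{x^i_\alpha}\in\mathcal{J}(x)$) and $\underline{a}\in\Sigma_A$ with $a_0=x^{i'}_\beta$, $\theta(\underline{a})=y$, then set $b_n=e_n$ for $n\leq 0$ and $b_n=a_{n-1}$ for $n\geq 1$. The seam $b_0\to b_1$ is admissible thanks to the matrix observation, so $\underline{b}\in\Sigma_A$. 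I then concatenate the two central shadows attached to $\underline{e}$ and to $\sigma^{-1}\underline{a}$: compatibility at the junction reduces to $y\in\Wcloc{fx}$, which holds by the very definition $y=proj_{D_{i'}}(fx)$, with center distance $d(fx,y)$ controlled by a multiple of $\delta$ because the allowed-future hypothesis places $fx$ in $sat^c(B_{i'},\delta)$ up to the $C\delta$ diameter of $C_{x^i_\alpha}$. Plaque expansiveness then identifies this pasted orbit with the unique central shadow of $\underline{b}$, giving $\theta(\underline{b})=x$ and $\theta(\sigma\underline{b})=y$. This is the step I expect to be the main obstacle, since it requires both a quantitative check that the concatenation remains a legitimate central pseudo-orbit of the right size and the use of plaque expansiveness to identify it as the shadow; alternatively one may phrase the whole step as verifying the hypothesis of Lemma \ref{rectangulosB} with $\alpha'=\beta$ and invoking that lemma.

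With $\underline{b}$ in hand, Proposition \ref{markovseq} yields $proj_{D_{i'}}\bigl(f\,\Wst{x,C_{x^i_\alpha}}\bigr)\subset\Wst{y,C_{x^{i'}_\beta}}$. Since $\mathcal{J}(x')=\mathcal{J}(x)$ places $x'\in C_{x^i_\alpha}$, and the hypothesis $x'\in\Wst{x,D_i}$ then gives $x'\in \Wst{x,D_i}\cap C_{x^i_\alpha}=\Wst{x,C_{x^i_\alpha}}$, the Markov inclusion forces $y'=proj_{D_{i'}}(fx')\in\Wst{y,C_{x^{i'}_\beta}}\subset C_{x^{i'}_\beta}$. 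Thus $C_{x^{i'}_\beta}\in\mathcal{J}(y')$. Running the symmetric argument (swapping $x\leftrightarrow x'$, $y\leftrightarrow y'$) and applying Lemma \ref{rectanglePx} closes the proof.
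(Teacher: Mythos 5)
Your argument for $\mathcal{J}(y)=\mathcal{J}(y')$ is correct and is essentially the same as the first half of the paper's proof (with the construction of $\underline{b}$ just re-deriving Lemma~\ref{rectangulosB}). But the opening reduction is wrong, and the gap it hides is exactly the second half of the paper's proof.

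You claim that Lemma~\ref{rectanglePx} ``in light of its proof'' reduces $P(y)=P(y')$ to $\mathcal{J}(y)=\mathcal{J}(y')$. It does not. In that lemma's proof the hypothesis is $x'\in P(x)\cap\mathcal{N}$; since $P(x)$ is an intersection of sets $int\,R^m_{x^i_{\alpha}:x^i_{\beta}}$ and these have pairwise disjoint interiors, the membership $x'\in P(x)$ already forces $x$ and $x'$ to lie in the \emph{same} subrectangle $C^m_{x^i_{\alpha}:x^i_{\beta}}$ for every relevant pair $(\alpha,\beta)$; the only thing left to check there is that the indexing families $\mathcal{J}(\cdot)$, $\mathcal{J}^*(\cdot)$ coincide, which is why proving $\mathcal{J}(x)=\mathcal{J}(x')$ suffices \emph{in that context}. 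In Proposition~\ref{dynamicaP} you do not know that $y'\in P(y)$, so you cannot import the ``same $m$'' conclusion for free. You must verify directly that for every $C_{x^{i'}_{\beta}}\in\mathcal{J}(y)$ and $C_{x^{i'}_{\beta'}}\in\mathcal{J}^*(y)$, the points $y$ and $y'$ fall in the same $C^m_{x^{i'}_{\beta}:x^{i'}_{\beta'}}$. Because $y'\in\Wst{y,D_{i'}}$, the stable test $\Wst{\cdot,C_{x^{i'}_{\beta}}}\cap C_{x^{i'}_{\beta'}}\ne\emptyset$ trivially gives the same answer at $y$ and $y'$, so that half is indeed automatic once $\mathcal{J}(y)=\mathcal{J}(y')$. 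The unstable test $\Wut{\cdot,C_{x^{i'}_{\beta}}}\cap C_{x^{i'}_{\beta'}}\ne\emptyset$ is \emph{not} automatic, and this is the whole point of the second, longer part of the paper's proof (take $w\in\Wut{y,C_{x^{i'}_{\beta}}}\cap C_{x^{i'}_{\beta'}}$, pull it back through the unstable Markov inclusion to find $z$, bracket $z$ with $x'$ to produce $u$, push forward, and use the stable Markov inclusion for a second sequence $\underline{c}$ to land $u'$ in $\Wut{y',C_{x^{i'}_{\beta}}}\cap C_{x^{i'}_{\beta'}}$). Your write-up never touches this, so it does not establish $P(y)=P(y')$.
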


\begin{proof}
We will first establish that $\mathcal{J}(y)=\mathcal{J}(y')$, and for this we note that due to symmetry it suffices to show only one inclusion. Suppose that $y\in C_{x^{i'}_{\beta}}$: since $i'$ is an allowed future for $P(x)$ the previous Lemma allow us to find a sequence $\underline{a}\in [x^{i}_{\alpha},x^{i'}_{\beta}]$ satisfying $\theta(\underline{a})=x,\theta(\sigma\underline{a})=y$. By Proposition \ref{markovseq},
$$
y'\in proj_{D_{i'}}(f\Wst{x,C_{a_0}})\subset\Wst{y,C_{a_1}}\subset C_{x^{i'}_{\beta}},
$$
i.e. $C_{x^{i'}_{\beta}}\in \mathcal{J}(y')$, hence $\mathcal{J}(y)\subset \mathcal{J}(y')$ as we wanted to show.

It then follows that $\mathcal{J}^{\ast}(y)=\mathcal{J}^{\ast}(y')$, and 
$$
\Wst{y,C_{x^{i'}_{\beta}}}\cap C_{x^{i'}_{\beta'}}\neq \emptyset\Leftrightarrow \Wst{y',C_{x^{i'}_{\beta}}}\cap C_{x^{i'}_{\beta'}}\neq \emptyset.
$$

We want to show that the same is true for the unstable manifolds, and thus conclude $P(y)=P(y')$. Consider a point $w\in\Wut{y,C_{x^{i'}_{\beta}}}\cap C_{x^{i'}_{\beta'}}$\ and take a sequence $\underline{a}\in [x^{i}_{\alpha},x^{i'}_{\beta}]$ as above. We have 
$$
proj_{D_{i'}}(f\Wut{x;C_{a_0}})\supset \Wut{y;C_{a_1}},
$$
hence there exists $z\in \Wut{x;C_{a_0}}$ such that $proj_{D_{i'}}(fz)=w$. The set $C_{a_0}$ is a rectangle, and since $P(x)=P(x')$ we get $u=\langle z,x' \rangle\in \Wut{x';C_{a_0}}$. As $\theta$ is bracket preserving and $C_{a_1}$ is a rectangle,
$$
u'=proj_{D_{i'}}(fu)=\langle w,y' \rangle \in C_{a_1}.
$$
On the other hand,  $z\in C_{x^{i}_{\alpha}},w\in C_{x^{i'}_{\beta'}}$ and thus there exists
$\underline{c}\in\Sigma_A$\ such that
$$
\theta(\underline{c})=z,\theta(\sigma\underline{c})=w,c_0=x_{\alpha}^{i},c_1=x^{i'}_{\beta'}.
$$
Using that $proj_{D_{i'}}(f\Wst{z;C_{c_0}})\subset \Wst{w,C_{c_1}}$ we conclude $u'\in C_{c_1}$, i.e.
$$
u'\in \Wut{y,C_{x^{i'}_{\beta}}}\cap C_{x^{i'}_{\beta'}}.
$$
We have shown that
$$
\Wut{y,C_{x^{i'}_{\beta}}}\cap C_{x^{i'}_{\beta'}}\neq \emptyset\Rightarrow \Wut{y,C_{x^{i'}_{\beta}}}\cap C_{x^{i'}_{\beta'}}\neq \emptyset
$$
and by symmetry we have the other implication. This completes the proof of the Proposition.
\end{proof}

\esp

\subsection{The Markov Partition for the transversal.}

Lemma \ref{rectanglePx} implies that the family $\mathcal{R}=\{R=cl P(x):x\in\mathcal{N}\}$ is finite, say $\mathcal{R}=\{R_1,\ldots R_s\}$. Each $R_m$ is a compact rectangle, with $R_m=cl\, int\, R_m$. Furthermore, if $P(x)\neq P(y)$ then $P(x)\cap P(y)=\emptyset$, and since these are open sets a simple topological argument implies that rectangles in $\mathcal{R}$ have pairwise disjoint interiors. We extend the notion of `allowed future' to rectangles in $\mathcal{R}$.

\begin{definition}
If $R_{\mu}\in \mathcal{R},D_i'$ allowed future for $R_{\mu}$ we define the map $\phi_{\mu,i'}^+:R_{\mu}\mapsto D_{i'}$ by
\begin{equation}
\phi_{\mu,i'}^+(x)=proj_{D_{i'}}(fx).
\end{equation}
\end{definition}

\begin{proposition}[Markov Property of $\mathcal{R}$]\label{dynamicaR}
Let $R_{\mu},R_{\nu}\in \mathcal{R}$ with $D_{i'}$ allowed future for $R_{\mu}$, and suppose that $x\in int\, R_{\mu}, \phi_{\mu,i'}^+(x)\in int\, R_{\nu}$. Then
\begin{align}
\phi_{\mu,i'}^+(\Wst{x,R_{\mu}})\subset \Wst{\phi_{\mu,i'}^+(x),R_{\nu}}\\
\phi_{\mu,i'}^+(\Wut{x,R_{\mu}})\supset \Wut{\phi_{\mu,i'}^+(x),R_{\nu}}
\end{align}
\end{proposition}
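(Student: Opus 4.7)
The plan is to first verify both inclusions on the ``regular'' subset $\mathcal{N}$, where Proposition \ref{dynamicaP} and Proposition \ref{markovseq} apply directly, and then to extend to the closed Markov rectangles $R_\mu,R_\nu$ by a density-and-continuity argument. To start I write $R_\mu=cl\,P(p)$, $R_\nu=cl\,P(q)$ for representatives $p,q\in\mathcal{N}$ with $R_\mu\subset D_i$, $R_\nu\subset D_{i'}$. Because $D_{i'}$ is an allowed future for $R_\mu$, there exist $\alpha,\alpha'$ with $C_{x^i_\alpha}\in\mathcal{J}(p)$ and $A_{x^i_\alpha,x^{i'}_{\alpha'}}=1$; the definition of $P(p)$ as an intersection of interiors of subsets of $C_{x^i_\alpha}$ forces $R_\mu\subset C_{x^i_\alpha}$, and similarly $R_\nu\subset C_{x^{i'}_\beta}$ whenever $C_{x^{i'}_\beta}\in\mathcal{J}(q)$.

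For the stable inclusion, $f$-invariance of $\Fcs$ together with the fact that $proj_{D_{i'}}$ is along center plaques already yields $\phi_{\mu,i'}^+(\Wcsloc{x})\subset\Wcsloc{y}$, so the only nontrivial point is to show $\phi_{\mu,i'}^+(x')\in R_\nu$ for $x'\in\Wst{x,R_\mu}$. I would first treat the case $x\in P(p)$, $y=\phi_{\mu,i'}^+(x)\in P(q)$, and $x'\in P(p)\cap\Wst{x,D_i}$ with $y':=\phi_{\mu,i'}^+(x')\in\mathcal{N}$: then $P(x')=P(p)=P(x)$ and Proposition \ref{dynamicaP} yields $P(y')=P(y)=P(q)$, whence $y'\in P(q)\subset R_\nu$. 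To reach an arbitrary $x'\in\Wst{x,R_\mu}$ I will use that $P(p)\cap\Wst{x,D_i}$ is dense in $\Wst{x,R_\mu}$ (since $P(p)$ is a rectangle: for a sequence $z_n\to x'$ in $P(p)$ the brackets $\langle z_n,x\rangle\in P(p)\cap\Wst{x,D_i}$ converge to $x'$ by continuity of the bracket), and that $(\phi_{\mu,i'}^+)^{-1}(\mathcal{N})$ is open and dense in $R_\mu$, because $\phi_{\mu,i'}^+$ is an open map (it is $f$ composed with a center holonomy between transverse discs) and $\mathcal{N}$ is open and dense in $\emph{im}\,\theta$. A Baire argument inside the Baire space $\Wst{x,R_\mu}$ then produces a sequence of ``good'' $x'_n\to x'$ with $\phi_{\mu,i'}^+(x'_n)\in P(q)\subset R_\nu$; closedness of $R_\nu$ and continuity of $\Fcs$ give the desired inclusion. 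A further approximation of $x$ by points in $\emph{int}\,R_\mu\cap\mathcal{N}$ removes the genericity assumption on $x$.

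For the unstable inclusion, applying Lemma \ref{rectangulosB} to $x,y$ yields $\underline{a}\in[x^i_\alpha,x^{i'}_\beta]$ with $\theta(\underline{a})=x$, $\theta(\sigma\underline{a})=y$, and the unstable half of Proposition \ref{markovseq} (reindexed through $\sigma\underline{a}$) gives $\phi_{\mu,i'}^+(\Wut{x,C_{x^i_\alpha}})\supset\Wut{y,C_{x^{i'}_\beta}}$. Hence every $w\in\Wut{y,R_\nu}\subset\Wut{y,C_{x^{i'}_\beta}}$ admits a preimage $x''\in\Wut{x,C_{x^i_\alpha}}$ under $\phi_{\mu,i'}^+$, and the remaining task is to show that in fact $x''\in R_\mu$. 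I would obtain this from the unstable analogue of Proposition \ref{dynamicaP}, which is derived by applying the very same proof to $f^{-1}$ (whose stable foliation is $\Fu$ and which is also plaque expansive), combined with the parallel density-plus-continuity scheme used above.

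The main obstacle I anticipate is exactly the density step, which relies on $\phi_{\mu,i'}^+$ being an open map, that is, on the center holonomy between transverse discs being locally a homeomorphism; this should follow from dynamical coherence together with plaque expansiveness and the choice of the discs $D_i$ as small transverse sections. The subtlety is bookkeeping: one must keep track of the ambient rectangles $C_{x^i_\alpha}$, $P(p)$ and $R_\mu$ at the same time, since a single boundary point can lie in several $C$'s and one has to select the indices $\alpha,\beta$ consistently throughout both the stable and the unstable arguments.
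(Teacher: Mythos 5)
Your proposal is correct and follows essentially the same strategy as the paper: establish the inclusions on the residual set $\mathcal{N}$ via Proposition \ref{dynamicaP} (and its unstable analogue), then pass to the closed rectangles by density of $\mathcal{N}\cap\phi^{-1}(\mathcal{N})$ and continuity of the local stable/unstable laminations. The paper's written proof is terser — it takes closures of the $P(x)$-level inclusion directly and delegates the density extension to Shub's book (page 137), whereas your unstable argument makes a small, redundant detour through the $C$-rectangles via Lemma \ref{rectangulosB} and Proposition \ref{markovseq} before invoking the unstable analogue of Proposition \ref{dynamicaP}; both routes require exactly that analogue, so nothing is gained, but nothing is lost either.
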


\begin{proof}
For a rectangle $R_{\eta}$ denote
$$
\partial^s R_{\eta}:=\{y:\in R_{\eta}:\Wst{x;R_{\eta}}\cap int\, R_{\eta}\}.
$$ 

Now take $x\in R_{\mu}\cap \mathcal{N}$ such that $y=\phi_{\mu,i'}(x)\in R_{\nu}\cap \mathcal{N}$. Then $R_{\mu}=cl P(x),R_{\nu}=cl P(y)$, and
$$
\Wst{x,R_{\mu}}=cl \Wst{x;P(x)},\ \Wst{y,R_{\nu}}=cl \Wut{y;P(x)}.
$$
By Proposition \ref{dynamicaP}
$$
\phi_{\mu,i'}^+(\Wst{x,P(x)})\subset \Wst{y;P(y)},
$$
and thus by continuity
$$
\phi_{\mu,i'}^+(\Wst{x,R_{\mu}})\subset \Wst{\phi_{\mu,i'}^+(x),R_{\nu}}.
$$

Points $x$ as above are dense in $\cup_{\eta} R_{\eta}$: standard arguments permit then to extend the conclusion for any point $x'\in int\, R_{\mu}, \phi_{\mu,i'}^+(x')\in int\, R_{\nu}$. Details can be found for example in \cite{Shub} page 137. Likewise for the unstable part.
\end{proof}

\esp

\begin{definition}
The disc $D_k$ is an allowed past of $R_{\mu}\subset D_i$ if $D_i$ is an allowed future of some rectangle
$R_{\nu}\subset D_k$ with $\phi_{\nu,i}(int R_{\nu})\cap int R_{\mu}\neq \emptyset$. In this case
define the map $\phi_{\mu,k}^{-}:R_{\mu}\mapsto D_{k}$ by
\begin{equation}
\phi_{\mu,k}^{-}(x)=proj_{D_{k}}(f^{-1}x).
\end{equation}
\end{definition}

\esp

The following is immediate from Proposition \ref{dynamicaR}.

\begin{corollary}\label{CorodynamicaR}
Let $R_{\mu},R_{\nu}\in \mathcal{R}$ with $D_{k}$ allowed past for $R_{\mu}$, and suppose that $x\in int\, R_{\nu}$, $\phi_{\mu,k}^{-}(x)\in int\, R_{\mu}$. Then
\begin{equation}
\phi_{\mu,k}^{-}(\Wut{x,R_{\mu}})\subset \Wut{\phi_{\mu,k}^{-}(x);R_{\nu}}
\end{equation}
\end{corollary}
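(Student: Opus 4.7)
I would derive the corollary from Proposition \ref{dynamicaR} by inverting the roles of the two rectangles. Write $R_{\mu}\subset D_i$, $R_{\nu}\subset D_k$, let $x\in \mathrm{int}\, R_{\mu}$ and set $y:=\phi_{\mu,k}^{-}(x)\in \mathrm{int}\, R_{\nu}$. Unpacking the definition of the backward projection gives $f^{-1}x\in\Wcloc{y}$, i.e.\ $fy\in\Wcloc{x}$ and $\mathrm{proj}_{D_i}(fy)=x$; in particular the forward assignment $\phi_{\nu,i}^{+}(y)=x$ is well defined.

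The first substantive step is to verify that $D_i$ is actually an allowed future of the rectangle $R_{\nu}$, so that Proposition \ref{dynamicaR} applies to the pair $(R_{\nu},R_{\mu})$. Using density of $\mathcal{N}$ in $\mathrm{im}\,\theta$ and continuity of $\phi_{\nu,i}^{+}$, pick $y_0\in\mathcal{N}\cap \mathrm{int}\,R_{\nu}$ close to $y$ so that $x_0:=\phi_{\nu,i}^{+}(y_0)\in\mathcal{N}\cap \mathrm{int}\,R_{\mu}$. Fix $C_{x^k_{\alpha'}}\in\mathcal{J}(y_0)$ and $C_{x^i_{\beta}}\in\mathcal{J}(x_0)$; Lemma \ref{combinatoria} guarantees that $x^k_{\alpha'}$ has at least one admissible successor, so Lemma \ref{rectangulosB} produces $\underline{b}\in[x^k_{\alpha'},x^i_{\beta}]$ with $\theta(\underline{b})=y_0$ and $\theta(\sigma\underline{b})=x_0$. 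Admissibility forces $A_{x^k_{\alpha'},x^i_{\beta}}=1$, which is exactly the definition of $D_i$ being an allowed future of $R_{\nu}$.

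With this in hand, Proposition \ref{dynamicaR} applied with the roles of $\mu$ and $\nu$ exchanged gives
$$
\phi_{\nu,i}^{+}\bigl(\Wut{y,R_{\nu}}\bigr)\;\supset\;\Wut{\phi_{\nu,i}^{+}(y),R_{\mu}}\;=\;\Wut{x,R_{\mu}}.
$$
Given $z\in\Wut{x,R_{\mu}}$ there is therefore $w\in\Wut{y,R_{\nu}}$ with $\phi_{\nu,i}^{+}(w)=z$; equivalently $fw\in\Wcloc{z}$ and hence $f^{-1}z\in\Wcloc{w}$. Since $w\in R_{\nu}\subset D_k$, this is precisely the statement $\phi_{\mu,k}^{-}(z)=w\in\Wut{y,R_{\nu}}=\Wut{\phi_{\mu,k}^{-}(x),R_{\nu}}$, yielding the desired inclusion.

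The main obstacle is the middle step: the hypothesis only guarantees that $D_k$ is an allowed past of $R_{\mu}$ through \emph{some} rectangle in $D_k$, not necessarily the particular $R_{\nu}$ containing $\phi_{\mu,k}^{-}(x)$. Certifying that $R_{\nu}$ itself admits $D_i$ as an allowed future is what forces the density/symbolic reproduction argument above; once that is secured the corollary reduces formally to the forward Markov property of Proposition \ref{dynamicaR} by passing to inverses.
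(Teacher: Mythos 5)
Your overall strategy is the same as the paper's: the Corollary is meant to be the Markov Property (Proposition \ref{dynamicaR}, second inclusion) read through the map $\phi^{-}$, after relabeling the discs and rectangles so that the roles of $R_{\mu}$ and $R_{\nu}$ are exchanged. You also correctly observe the subtle point that the paper's ``immediate from Proposition \ref{dynamicaR}'' glosses over: to invoke that proposition with the pair $(R_{\nu},R_{\mu})$ one needs $D_i$ to be an allowed future of $R_{\nu}$, while the hypothesis of the Corollary, read literally, only asserts that $D_k$ is an allowed past of $R_{\mu}$ through \emph{some} rectangle of $D_k$, a priori not the one hit by $\phi_{\mu,k}^{-}(x)$. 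The intended reading is almost certainly that $R_{\nu}$ is precisely the rectangle in $D_k$ witnessing the allowed-past condition, in which case $D_i\in af(R_{\nu})$ holds by definition and no extra work is needed.

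The problem is that your attempted repair of this gap is circular. You fix $C_{x^k_{\alpha'}}\in\mathcal{J}(y_0)$, $C_{x^i_{\beta}}\in\mathcal{J}(x_0)$ and invoke Lemma \ref{rectangulosB} to produce $\underline{b}\in[x^k_{\alpha'},x^i_{\beta}]$. But the hypothesis of Lemma \ref{rectangulosB} is that $y_0\in C_{x^k_{\alpha'},x^i_{\gamma}}$ for some $\gamma$, i.e.\ that there is an admissible transition $A_{x^k_{\alpha'},x^i_{\gamma}}=1$ from the symbol $x^k_{\alpha'}$ into the disc $D_i$ realized by a sequence coding $y_0$. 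Knowing only $y_0\in C_{x^k_{\alpha'}}$ (that is, $C_{x^k_{\alpha'}}\in\mathcal{J}(y_0)$) does not give this; a coding of $y_0$ starting at $x^k_{\alpha'}$ can have its second entry in $D_{j(k,\alpha')}$, and nothing in the construction forces $j(k,\alpha')=i$. The reference to Lemma \ref{combinatoria} does not help, since it only exhibits an admissible successor of $x^k_{\alpha'}$ in $D_{j(k,\alpha')}$, not in $D_i$. In short, the admissibility $A_{x^k_{\alpha'},x^i_{\beta}}=1$ that you want to extract is essentially what Lemma \ref{rectangulosB} presupposes, so it cannot be obtained this way. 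The rest of the argument (pushing unstable sets forward with Proposition \ref{dynamicaR} and pulling back with $\phi_{\mu,k}^{-}$) is fine once that membership $D_i\in af(R_{\nu})$ is secured, which, as said, follows directly from the intended reading of the hypothesis.
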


We subsume the results of this section in the following Theorem.

\begin{theorem}\label{MarkovTransversal}
Given $\delta>0,\rho>0$ there exist a family $\mathcal{D}=\{D_1,\ldots, D_N\}$of $s+u$ dimensional (embedded) discs transverse to \Fc, a family $\mathcal{R}=\{R_1,\ldots,R_s\}$ and a function $af:\mathcal{R}\rightarrow 2^{\mathcal{D}}$  satisfying the following.

\begin{enumerate}
\item The family $\mathcal{D}$ is pairwise disjoint.
\item Each $R_{\mu}$ is a rectangle contained in some disc $D_i$, and with respect to the relative topology of $D_i$ it holds $R_{\mu}=cl\, int R_{\mu}$. Moreover $diam R_{\mu}<\rho$.
\item If $R_{\mu},R_{\nu}$ are contained in the same disc $D_i$ and their interiors share a common point, then $R_{\mu}= R_{\nu}$.
\item $M=\cup_{\mu=1}^s sat^c(R_{\mu},\delta)$.
\item Suppose that $R_{\mu}\subset D_i,R_{\nu}\in D_{i'}$ satisfy  $D_{i'}\in af(R_{\mu})$. If  $x\in int\, R_{\mu}, \phi_{\mu,i'}^+(x)\in int\, R_{\nu}$, then it holds
\begin{enumerate}
\item $\phi_{\mu,i'}^+(\Wst{x,R_{\mu}})\subset \Wst{\phi_{\mu,i'}^+(x),R_{\nu}}$.
\item $\phi_{\mu,i'}^{-}(\Wut{\phi_{\mu,i'}^+(x),R_{\nu}})\subset \Wut{x,R_{\mu}}$
\end{enumerate}
where $\phi_{\mu,i'}^+=proj_{D_{i'}}\circ f,\phi_{\mu,i'}^-=proj_{D_i}\circ f^{-1}$.
\end{enumerate}
\end{theorem}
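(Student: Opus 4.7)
The plan is to observe that the theorem is essentially a consolidation of the three preceding subsections, so the proof will be an assembly argument with a small amount of topological bookkeeping. First I would fix $\rho>0$ and $\delta>0$ and choose $\epsilon\in(0,\epsilon_0)$ (with $\epsilon_0$ as in Lemma \ref{projectain}) small enough that the initial rectangles $C_{x^i_\alpha}$ arising from any $(\epsilon,\delta)$-adapted family satisfy $\text{diam}(C_{x^i_\alpha})\leq 2C\delta<\rho$. I then invoke the existence lemma for $(\epsilon,\delta)$-adapted families $\{D_i\}_{i=1}^N$ and run the whole machinery: the subdivision $\{\Delta^i_\alpha\}$ of each $B_i$, the symbol set $I$, the transition matrix $A$, the continuous map $\theta:\Sigma_A\to \cup_i E_i$ of \eqref{deftheta}, the covering $\emph{im}\,\theta=\cup_{i,\alpha}C_{x^i_\alpha}$, and Bowen's refinement producing the open rectangles $P(x)$ for $x\in \mathcal{N}$.

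Setting $\mathcal{R}=\{cl\,P(x):x\in\mathcal{N}\}=\{R_1,\ldots,R_s\}$ (a finite collection by Lemma \ref{rectanglePx}) and declaring $af(R_\mu)=\{D_{i'}:D_{i'}\text{ is an allowed future of }R_\mu\}$, I verify the five conclusions. Item (1) is condition (2) of adapted families. For (2), each $R_\mu$ is the closure of an open rectangle contained in $\emph{im}\,\theta\cap D_i \subset E_i$ for a unique $i$, and since $E_i=h_{D_i}(\epsilon/2)$ is strictly interior to $D_i$, $R_\mu$ lies away from $\partial D_i$; the identity $R_\mu=cl\,int\,R_\mu$ is automatic from $R_\mu=cl\,P(x)$ with $P(x)$ open, and the diameter bound follows from $R_\mu\subset C_{x^i_\alpha}$. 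Item (3) is a topological consequence of the fact that distinct $P(x),P(x')$ are disjoint open sets: if $cl\,P(x)$ and $cl\,P(x')$ shared an interior point, then $P(x)\cap P(x')\neq\emptyset$, contradicting Lemma \ref{rectanglePx}.

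For item (5), part (a) is Proposition \ref{dynamicaR}. Part (b) is obtained from Corollary \ref{CorodynamicaR} after a change of viewpoint: whenever $D_{i'}\in af(R_\mu)$ and $y=\phi^+_{\mu,i'}(x)\in int\,R_\nu$, the hypothesis of the Corollary is satisfied with the roles of $R_\mu,R_\nu$ swapped (so that $D_i$ is an allowed past of $R_\nu$), and the two projections $\phi^+_{\mu,i'}$ and $\phi^-_{\mu,i'}$ are inverse to each other on the relevant local unstable pieces, so the containment in the Corollary rearranges into the stated inclusion $\phi^{-}_{\mu,i'}(\Wut{\phi^+_{\mu,i'}(x),R_\nu})\subset \Wut{x,R_\mu}$.

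The step I expect to be the main obstacle is item (4), the covering $M=\cup_\mu sat^c(R_\mu,\delta)$. The adapted family property gives $M=\cup_i sat^c(B_i,\delta)$, so it reduces to $\cup_i B_i\subset \cup_\mu R_\mu$. Proposition \ref{sobre} supplies $\cup_i B_i\subset\emph{im}\,\theta$, and by construction $\bigcup_{x\in\mathcal{N}}P(x)=\mathcal{N}=\emph{im}\,\theta\setminus \bigcup_{i,\alpha,\beta}F_{x^i_\alpha:x^i_\beta}$ is open and dense in $\emph{im}\,\theta$ (via the Baire argument invoked before Lemma \ref{rectanglePx}); taking closures gives $\cup_\mu R_\mu\supset cl(\mathcal{N})\supset \emph{im}\,\theta\supset \cup_i B_i$. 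The delicate point here is verifying that the frontiers $F_{x^i_\alpha:x^i_\beta}$ are genuinely nowhere dense in $\emph{im}\,\theta$, which in turn relies on each $R_{x^i_\alpha:x^i_\beta}^m$ being the closure of its (relative) interior inside the disc $D_i$, a property recorded in the discussion following the Bowen refinement.
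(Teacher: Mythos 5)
Your proposal is correct and follows the same route as the paper, which likewise presents Theorem \ref{MarkovTransversal} as a direct assembly of the preceding subsections (the $(\epsilon,\delta)$-adapted family, the map $\theta$, the rectangles $C_{x^i_\alpha}$, and the Bowen refinement), invoking Lemma \ref{rectanglePx}, Proposition \ref{sobre}, Proposition \ref{dynamicaR}, and Corollary \ref{CorodynamicaR} exactly where you do. The only small remark is that item (5)(b) already follows directly from the unstable inclusion in Proposition \ref{dynamicaR} by applying $\phi_{\mu,i'}^-$ to both sides, so the detour through Corollary \ref{CorodynamicaR} with swapped roles is slightly redundant, though of course equally valid.
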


In synthesis, the rectangles $R_1,\ldots R_s$ are proper, with disjoint interiors and satisfy the Markov property for suitable choices of the future. Observe nonetheless that these rectangles do not have a canonically defined future, nor a (canonically defined) past. Theorem A is  proved.

\section{Symbolic representation of the transversal.}

Consider the subshift of finite type $\Sigma_S$ determined by the matrix $S$ where
$$
S_{\mu,\nu}=1\Leftrightarrow \nu\subset D_{i'},i'\in af(R_{\mu})\text{ and } \phi_{\mu,i'}^+(int R_{\mu})\cap int R_{\nu}\neq\emptyset.
$$

For $\underline{a}\in \Sigma_S$ and $l\leq l'\in \mathbb{Z}$ denote $a[l,l']=a_l,\ldots,a_{l'}$. Observe then for the string $a[l,l']$ the rectangles $R_{a_l},\ldots R_{a_{l'}}$ are ``linked'', in the sense
that if $D_{i_{\eta}}$ denotes the disc containing $R_{\eta}, \eta=l,\ldots,l'$, there exists a non-empty relatively open set $V_{a[l,l']}\subset R_{a_l}$ such that the map
$$\
\phi_{a[l,l']}^+=\phi_{a_{l'},i_{l'}}^+\circ,\ldots,\phi_{a_{l+1},i_{l+1}}^+:R_{a_l}\rightarrow R_{a_{l'}} 
$$
is  well defined and continuous. Similarly,
$$
\phi_{a[l,l']}^-:W_{a[l,l']}\subset R_{a_{l'}}\rightarrow R_{a_l}
$$
is constructed using the maps $\phi^{-}_{a_{\eta},i_{\eta}}$.

These considerations allow us to define
\begin{equation}
h^{s}(\underline{a})=\bigcap_{n\geq 0}(\phi_{a[o,n]}^{+})^{-1}(R_{a_n})=\bigcap_{n\geq 0} V_{a[0,n]}
\end{equation}
\begin{equation}
h^{u}(\underline{a})=\bigcap_{n\geq 0}(\phi_{a[-n,0]}^{-})^{-1}(R_{a_{-n}})=\bigcap_{n\geq 0} W_{a[-n,0]} 
\end{equation}
\begin{equation}
h(\underline{a})=h^{+}(\underline{a})\cap h^{-}(\underline{a}).
\end{equation}

\esp

\begin{lemma}\label{haches}
If $\underline{a}\in\Sigma_S$, there exists $x\in R_{a_0}$ such that
\begin{enumerate}
 \item $h^s(\underline{a})=\Wst{x,R_{a_0}}$
 \item $h^{u}(\underline{a})=\Wut{x,R_{a_0}}$
 \item $h(\underline{a})=x$.
 \end{enumerate}
\end{lemma}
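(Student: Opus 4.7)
The plan is to prove (1) in detail; part (2) is symmetric (applied to the backward itinerary via Corollary \ref{CorodynamicaR}), and (3) then follows because inside the rectangle $R_{a_0}$ the intersection of any stable slice with any unstable slice is a single bracket point.

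First, observe the monotonicity $V_{a[0,n+1]} \subset V_{a[0,n]}$, so that $h^s(\underline{a})$ is a nested intersection of compact subsets of $R_{a_0}$. I would then show by induction on $n$ that each $V_{a[0,n]}$ is saturated by stable slices: if $x \in V_{a[0,n]}$ and $y \in \Wst{x,R_{a_0}}$, the Markov property (5)(a) of Theorem \ref{MarkovTransversal} gives $\phi^+_{a_0,i_1}(y) \in \Wst{\phi^+_{a_0,i_1}(x),R_{a_1}} \subset R_{a_1}$, and iterating one step at a time yields $\phi^+_{a[0,j]}(y) \in R_{a_j}$ for every $j\leq n$, hence $y \in V_{a[0,n]}$. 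For nonemptiness I would use the transitions $S_{a_j,a_{j+1}}=1$ together with the surjectivity consequence of Markov property (5)(b)---namely $\phi^+_{\mu,i'}(\Wut{x,R_\mu}) \supset \Wut{\phi^+_{\mu,i'}(x),R_\nu}$, which follows because $\phi^+\circ\phi^- = \mathrm{id}$ on $R_\nu$---to pull back an initial point $z_n \in \mathrm{int}\,R_{a_n}$ step by step along unstable slices, producing $z_0 \in V_{a[0,n]}$. Compactness then gives $h^s(\underline{a})$ nonempty, closed, and still a union of stable slices.

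The main obstacle is proving that $h^s(\underline{a})$ contains exactly one stable slice. Given two points $x,y \in h^s(\underline{a})$, stable saturation lets me replace $y$ by the bracket $\langle x,y\rangle_{D_{i_0}}$, so I may assume $y \in \Wut{x,R_{a_0}}$. Because $f$ preserves $\Fcu$ and the center projection $\mathrm{proj}_{D_{i'}}$ sends each $W^{cu}_{loc}(fx)$ into itself, induction gives $\phi^+_{a[0,n]}(y) \in \Wut{\phi^+_{a[0,n]}(x),R_{a_n}}$ for every $n$, so the transverse distance between these two images inside $R_{a_n}$ is bounded by $\mathrm{diam}(R_{a_n})<\rho$. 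On the other hand, uniform unstable expansion of $f$ together with the bi-Lipschitz constant $K_1$ of the disc embeddings forces this distance to grow like $\lambda^{n} d_u(x,y)$ for some $\lambda>1$, a contradiction unless $d_u(x,y)=0$, i.e.\ $y=x$. Hence $h^s(\underline{a}) = \Wst{x,R_{a_0}}$. Part (2) is analogous and produces some $x'$ with $h^u(\underline{a}) = \Wut{x',R_{a_0}}$; (3) then follows at once since $h(\underline{a}) = h^s(\underline{a}) \cap h^u(\underline{a})$ is the single bracket point $\tilde{x} = \langle x',x\rangle_{D_{i_0}}$, and this $\tilde{x}$ satisfies $\Wst{\tilde{x},R_{a_0}} = \Wst{x,R_{a_0}} = h^s(\underline{a})$ and $\Wut{\tilde{x},R_{a_0}} = \Wut{x',R_{a_0}} = h^u(\underline{a})$, so all three conclusions hold with $\tilde{x}$ in place of the original $x$.
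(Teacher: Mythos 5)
Your argument for the nonemptiness and stable-saturation of $h^s(\underline{a})$ follows the same outline as the paper and is fine, as is the reduction to the case $y\in\Wut{x,R_{a_0}}$ (modulo the bracket order: with the paper's convention $\langle x,y\rangle_D\in\Wst{x,D}\cap\Wut{y,D}$, you should replace $y$ by $\langle y,x\rangle$, and in the last line take $\tilde{x}=\langle x,x'\rangle$ rather than $\langle x',x\rangle$).

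The genuine gap is in the uniqueness step. You assert that ``uniform unstable expansion of $f$ together with the bi-Lipschitz constant $K_1$ of the disc embeddings'' forces the transverse distance between $\phi^+_{a[0,n]}(x)$ and $\phi^+_{a[0,n]}(y)$ to grow like $\lambda^n$. But $\phi^+_{\mu,i'}=proj_{D_{i'}}\circ f$ is $f$ followed by a center holonomy between transversals, and for a general partially hyperbolic diffeomorphism $\Fc$ is only H\"older continuous: there is no uniform lower bound on the metric distortion of these holonomies. The factor-$\lambda$ expansion supplied by $f$ along $E^u$ can therefore be cancelled or reversed by the projections, and there is no reason for $\mathrm{diam}\,\Wut{x,V_{a[0,n]}}$ to shrink --- let alone geometrically. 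The constant $K_1$ controls the disc embeddings, not the holonomy. The paper explicitly warns about this obstacle in the remark following Proposition~\ref{projcont}: establishing that the diameters of the sets $V_{a[0,n]}$ diminish ``seems difficult without assuming some condition,'' and the Lipschitz case (where your estimate would be valid for small $\delta$) is treated there only as a separate, stronger hypothesis. The paper's proof of the lemma avoids the metric estimate entirely: the forward itineraries $\{\phi^+_{a[0,n]}(x)\}_{n\geq 0}$ and $\{\phi^+_{a[0,n]}(y)\}_{n\geq 0}$ are two central (hence center-stable) pseudo-orbits that remain within $\rho$ of each other for all $n\geq 0$, and plaque expansiveness of $\Fcs$ --- obtained from plaque expansiveness of $\Fc$ via Lemma~\ref{plaqueexpother} --- then gives $x\in\Wcsloc{y}$ directly, with no control on the center holonomy required.
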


\begin{proof}
Part 5 of Theorem \ref{MarkovTransversal} imply that there exists a subset $T\subset R_{a_0}$ such that  $\cap_{n\geq 0} V_{a[0,n]}=\cup_{x\in T} \Wst{x,R_{a_0}}$. As $\Fc$ is plaque expansive, $\Fcs$ is plaque expansive as well (Cf. Lemma \ref{plaqueexpother}), and thus for some $x\in R_{a_0}, T=\Wst{x,R_{a_0}}$. The second part is similar and the third is consequence of the first two.
\end{proof}

We discuss now continuity of these maps. Continuity of $h:\Sigma_S\rightarrow \cup_{\eta} R_{\eta}$ will be understood with respect to the product metric of $\Sigma_S$. In the case
of $h^{+},h^{-}$ we arbitrarily pick points $x_{\mu}\in int R_{\mu}$: continuity of $h^{s}$ will be understood as continuity
of the map $$\Sigma_S\xrightarrow[]{h^{s}}\cup_{\mu} R_{\mu}\rightarrow \cup_{\mu}\Wut{x_{\mu},R_{\mu}},$$ where the last map collapses each $R_{\mu}$ to $\Wu{x_{\mu},R_{\mu}}$ along the partition $\{\Ws{x,R_{\mu}}\}_{x\in R_{\mu}}$. Similarly for $h^{u}$.

\esp

\begin{proposition}\label{projcont}
 The maps $h^{s},h^{u},h$ are continuous, where $\Sigma_S$ is equipped with the product metric. If furthermore $\Fc$ is Lipschitz, then they are Lipschitz continuous if $\delta$ is sufficiently small. 
\end{proposition}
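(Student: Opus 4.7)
The plan is to reduce both statements to controlling the transverse diameters of the cylinders $V_{a[0,n]}$ and $W_{a[-n,0]}$. Each $\phi^+_{\mu,i'}$ and $\phi^-_{\mu,k}$ is a continuous map (composition of $f$ with the continuous projection along $\Fc$), so the cylinders are subrectangles of $R_{a_0}$ which, by the Markov property of Theorem \ref{MarkovTransversal}, are saturated by local stable plaques (respectively unstable plaques for $W_{a[-n,0]}$). By Lemma \ref{haches} the nested intersection of these cylinders is a single stable (respectively unstable) plaque, and the intersection with the opposite family produces the single point $h(\underline{a})$.

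For continuity I would establish the uniform shrinking statement: for every $\epsilon>0$ there exists $N$ such that for every $\underline{a}\in\Sigma_S$ and every $x,y\in \overline{V_{a[0,N]}}$ with $y\in\Wut{x,R_{a_0}}$, one has $d(x,y)<\epsilon$, and symmetrically for the stable diameter of $\overline{W_{a[-N,0]}}$. If this failed, compactness of $\Sigma_S$ and of $\bigcup_\mu R_\mu$ would produce $\underline{a}^{(k)}\to\underline{a}$ and $x_k\to x,\ y_k\to y$ with $d(x,y)\geq\epsilon$, $y_k\in\Wut{x_k,R_{a^{(k)}_0}}$, and $x_k,y_k\in \overline{V_{a^{(k)}[0,k]}}$. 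Since $\underline{a}^{(k)}$ eventually agrees with $\underline{a}$ on any fixed window, $x,y\in\bigcap_n \overline{V_{a[0,n]}}=h^s(\underline{a})=\Wst{z,R_{a_0}}$ for some $z$; combined with the limit relation $y\in\Wut{x,R_{a_0}}$ and the product structure of the rectangle, this forces $x=y$, contradicting $d(x,y)\geq\epsilon$. From the uniform shrinking, continuity of $h$ follows: if $\underline{a},\underline{b}$ agree on $[-N,N]$ then $h(\underline{a}),h(\underline{b})\in \overline{V_{a[0,N]}}\cap \overline{W_{a[-N,0]}}$, a set of diameter $<2\epsilon$. The statements for $h^s,h^u$ follow in the same way, only requiring one-sided agreement.

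For Lipschitz regularity, when $\Fc$ is Lipschitz the center holonomies, and hence the projections $proj_{D_i}$, are Lipschitz along local $\Fcu$-plaques with constants of size $1+O(\delta)$. Since by the Markov property $\phi^+_{\mu,i'}$ sends $\Wut{\cdot,R_\mu}$ into $\Wut{\cdot,R_\nu}$ and $f$ uniformly expands $E^u$ by a factor $\mu>1$, for $\delta$ small enough the composition expands transverse unstable distances on $\Wut{\cdot,R_\mu}$ by a uniform factor $\mu_0>1$. Iterating yields $\mathrm{diam}_u\overline{V_{a[0,n]}}\leq C\mu_0^{-n}$ and dually $\mathrm{diam}_s\overline{W_{a[-n,0]}}\leq C\mu_0^{-n}$. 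Translating this back to the symbolic side then gives Lipschitz continuity of $h,h^s,h^u$ with respect to a compatible metric on $\Sigma_S$ (after a harmless rescaling of the base).

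The main obstacle I expect is the quantitative step for the Lipschitz statement: transporting the uniform hyperbolic expansion of $f|E^u$ through the projection along centers. The qualitative uniform shrinking needed for continuity follows cleanly from compactness and plaque expansiveness, but obtaining an explicit exponential rate requires both the Lipschitz hypothesis on $\Fc$ (so that $proj_{D_i}$ is Lipschitz along $\Fcu$-plaques) and the smallness condition on $\delta$ (so that the projection Lipschitz constant remains close enough to $1$ that the expansion factor $\mu>1$ is not destroyed by the composition).
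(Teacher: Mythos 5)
Your argument is correct, and it takes a recognizably different route from the paper's. You prove an explicit \emph{uniform shrinking} statement --- that the transverse (unstable, resp.\ stable) diameters of the nested cylinders $V_{a[0,n]}$ (resp.\ $W_{a[-n,0]}$) tend to $0$ uniformly in $\underline{a}$ --- by a compactness/diagonal argument, reducing the contradiction to the fact that a limit pair $x,y$ would lie simultaneously in the single stable plaque $\bigcap_n \overline{V_{a[0,n]}}=h^s(\underline a)$ (Lemma~\ref{haches}, i.e.\ plaque expansiveness of $\Fcs$) and on a common unstable plaque, whence $x=y$ by the product structure of the rectangle. Continuity of $h^s,h^u,h$ then follows at once. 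The paper instead argues by contradiction with the \emph{lower} bound $r_n=\inf\{\operatorname{diam}\widetilde W^u(x,V_{a[0,n]})\}>0$: assuming $h^s$ is discontinuous, it produces a point $x\neq h^s(\underline a)$ with $x\in\bigcap_n V_{a[0,n]}$, again contradicting plaque expansiveness. Both arguments rest on the same two pillars (compactness of $\Sigma_S\times\bigcup_\mu R_\mu$ and plaque expansiveness through Lemma~\ref{haches}), but your version makes the uniform-modulus-of-continuity statement explicit, which is arguably cleaner and also feeds more naturally into the Lipschitz part. For that part both proofs coincide in substance: when $\Fc$ is Lipschitz one chooses $\delta$ small so the center holonomy composed with $f$ contracts stable (resp.\ expands unstable) transverse distances by a definite factor, giving geometric decay of the cylinder diameters and hence Lipschitz continuity with respect to the product metric on $\Sigma_S$. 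One small point worth flagging in your write-up: Lemma~\ref{haches} is stated for $\bigcap_n V_{a[0,n]}$, whereas you use $\bigcap_n \overline{V_{a[0,n]}}$; this is harmless because the $V$'s are nested rectangles and the conclusion of plaque expansiveness for $\Fcs$ applies equally to the closed nested intersection, but it deserves a sentence.
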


\begin{proof}
It suffices to establish continuity of $h^{s}$. Since $\mathcal{R}$ is finite and $\delta$ is fixed we have that for $n\geq0$ it holds
$$
r_n:=\inf\{diam(\Wut{x,V_{a[0,n]}}):\underline{a}\in\Sigma_S, x\in V_{a[0,n]}\}>0.
$$

Now assume by contradiction that $h^{s}$ is not continuous. Then there exists a converging sequence $\underline{a}^N\xrightarrow[N\mapsto\oo]{}\underline{a}$
in $\Sigma_S$ such that $x_N=h^{s}(\underline{a}^N)\xrightarrow[n\mapsto\oo]{} x\neq h^{s}(\underline{a})$. Fix $n$ and take $N_0$ such that for $N\geq N_0,d(x_N,x)<r_n$. Then by definition of $h^{s}$
$$
x\in V_{a^N[0,n]},
$$
and since $\underline{a}^N\xrightarrow[N\mapsto\oo]{}\underline{a}$, we deduce that $x\in V_{a[0,n]}$ which in turn implies that $x\in\cap_{n\geq 0} V_{a[0,n]}$. This contradicts the fact that $\Fcs$ is plaque expansive. 

If we further assume that $\Fc$ is Lipschitz, we can choose $\delta$ sufficiently small such that for every holonomy transport $h$ associated to an $(\epsilon,\delta)$  family, the map $h\circ f$ contracts stable distances by a factor of $0<k<1$. Then  for every $\underline{a}\in \Sigma_S,n\geq0$,
$$
sup\{diam(\Wu{x,V_{a[0,n]}}):x\in V_{a[0,n]}\}\leq k^n sup\{diam(\Wu{x,R_{a_0}}):x\in R_{a_0}\},  
$$
and it follows that $h^{s}$ is Lipschitz. 
\end{proof}

\begin{remark}
For a general partially hyperbolic diffeomorphism, the center foliation is only H\"older continuous \cite{HolFol}. Nonetheless, establishing that 
$h,h^{s},h^{u}$ are H\"older in general seems difficult without assuming some condition that forzes the diameter of the sets $V$ to
diminish after every iteration. The argument seems similar to the one necessary to establish plaque expansiveness in general, which is at the moment of writing, still unknown. 
\end{remark}

We now investigate the dynamics of the subshifts. Recall that given a topological space $X$, a pseudo-group on $X$ is a set $\mathcal{G}=\{g:d(g)\rightarrow r(g)\}$ of local homeomorphisms between proper sets\footnote{In the literature, the notion of pseudo-group
is usually defined using open sets instead of proper ones, but the extension to the later case is straightforward} of $X$ such that
\begin{enumerate}
 \item $Id:X\rightarrow X\in \mathcal{G}$
 \item $g\in \mathcal{G}\Rightarrow g^{-1}\in\mathcal{G}$.
 \item $g:d(g)\rightarrow r(g),g':d(g')\rightarrow r(g')\in \mathcal{G}$ with $d(g')\cap r(g))$ proper then 
 $$
 g'\circ g:g^{-1}d(g')\rightarrow g'(r(g))\in \mathcal{G}.
 $$
 \item $g\in \mathcal{G}, U\subset d(g)$ proper implies $g|:U\rightarrow g(U)\in\mathcal{G}$.
\item Suppose that $U,V\subset X$ are proper and $g:U\rightarrow V$ is a homeomorphism. Suppose that there exist a family $\{g_i:d(g_i)\rightarrow r(g_i)\}_i\subset \mathcal{G}$
such that $\cup_i d(g_i)-U, g|d(g_i)=g$. Then $g\in\mathcal{G}$.
\end{enumerate}
See for example \cite{FolMeasPres}. Note that there is a natural action $\mathcal{G}\curvearrowright X$.

\esp

Let $\Gamma(\mathcal{R})=\cup_{\mu}R_{\mu}$ and consider the pseudo-group $\mathcal{G}(\mathcal{R})$ of homeomorphisms of $\Gamma(\mathcal{R})$ generated by $\{\phi_{\mu,i'},\phi_{\mu,i'}^-:i'\in af(R_{\mu})\}$. Our previous discussions can be summarized by the following (Cf. Corollary \ref{coroA}).

\esp

\begin{theorem}
There exists a continuous surjective map $h:\Sigma_S\rightarrow \Gamma(\mathcal{R})$ which semi-conjugates the action of the shift $\sigma\curvearrowright\Sigma_S$
with the natural action $\mathcal{G}(\mathcal{R})\curvearrowright \Gamma(\mathcal{R})$.
\end{theorem}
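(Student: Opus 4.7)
The plan is to verify three properties of the map $h$ already defined prior to the theorem: continuity, surjectivity onto $\Gamma(\mathcal{R})$, and equivariance with respect to the pseudo-group $\mathcal{G}(\mathcal{R})$. Continuity is already supplied by Proposition~\ref{projcont}, so the task reduces to establishing the remaining two items.

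For surjectivity, I would first fix a point $x\in int\, R_{\mu_0}$ and construct $\underline{a}\in\Sigma_S$ recursively by setting $a_0=\mu_0$ and choosing indices $a_n$ for $n\geq 1$ so that
\[
\phi_{a_{n-1},i_n}^+\circ\cdots\circ\phi_{a_0,i_1}^+(x)\in int\, R_{a_n}.
\]
At each step, the existence of some admissible index is guaranteed by item (4) of Theorem~\ref{MarkovTransversal} ($M=\cup_{\mu} sat^c(R_{\mu},\delta)$), and uniqueness within a given disc by item (3). That each transition is admissible, i.e.\ $S_{a_{n-1},a_n}=1$, follows from the definition of $S$ together with the Markov property of Proposition~\ref{dynamicaR}, since the forward projection lands in the interior of $R_{a_n}$. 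A symmetric construction based on the maps $\phi^{-}$ and Corollary~\ref{CorodynamicaR} supplies the negative coordinates. By design, $x\in V_{a[0,n]}\cap W_{a[-n,0]}$ for every $n\geq 0$, so Lemma~\ref{haches} yields $h(\underline{a})=x$. For a boundary point $x\in\Gamma(\mathcal{R})$, I would approximate $x$ by interior points $x_k$ with preimages $\underline{a}^k\in\Sigma_S$, extract a convergent subsequence by compactness of $\Sigma_S$, and conclude by continuity of $h$.

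For the semi-conjugacy, the candidate identity at $\underline{a}\in\Sigma_S$ with $R_{a_1}\subset D_{i'}$ is
\[
h(\sigma\underline{a})=\phi_{a_0,i'}^+(h(\underline{a})),
\]
which expresses $h$ as equivariant under the generator $\phi_{a_0,i'}^+\in\mathcal{G}(\mathcal{R})$ attached to the transition $a_0\mapsto a_1$. This follows from the definition of $h$ and the set-theoretic relation $\phi_{a_0,i'}^+(V_{a[0,n+1]})\subset V_{(\sigma\underline{a})[0,n]}$, together with the analogous behaviour of the $W$-sets under $\phi_{a_0,i'}^+$; intersecting over $n\geq 0$ and applying Lemma~\ref{haches} yields the identity. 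The statement for a general element of $\mathcal{G}(\mathcal{R})$ then follows by composition, since every element of the pseudo-group is built from such generators and their inverses.

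The main obstacle I foresee lies in controlling boundary points. The Markov property of Theorem~\ref{MarkovTransversal}(5) is formulated on $int\, R_{\mu}$, so both the construction of a preimage and the verification of equivariance at a boundary point rely on a density argument. This is the familiar difficulty when transferring a Markov structure from the open rectangles $P(x)$ to their closures $R_{\mu}=cl\, P(x)$, and must be handled via approximation together with continuity of $h$ and of the pseudo-group generators. Once this extension is verified, the theorem follows directly from the earlier lemmas.
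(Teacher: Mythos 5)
Your proposal follows the same route the paper takes implicitly: the map $h$ is already defined, Lemma~\ref{haches} makes it single-valued, Proposition~\ref{projcont} gives continuity, and the theorem is stated in the paper as a summary of these discussions without a separate argument, so your write-up is filling in the surjectivity and equivariance checks that the paper leaves to the reader. Two small points deserve tightening. First, the existence of an admissible next symbol at each step of your recursion is not a direct consequence of item~(4) of Theorem~\ref{MarkovTransversal}; that item only gives the covering $M=\bigcup_\mu sat^c(R_\mu,\delta)$ and says nothing about the map $af$. What actually supplies an allowed future is Lemma~\ref{combinatoria} (which ensures $D_{j(i,\alpha)}\in af(R_\mu)$ for any $C_{x^i_\alpha}\in\mathcal{J}$), together with the fact that a point of $\mathcal{N}$ lies in $\emph{im}\,\theta$ so that the center projection of its $f$-image again lies in $\emph{im}\,\theta=\bigcup_\mu R_\mu$ (Lemma~\ref{dynrectangulo}). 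Second, your recursion as written requires that every iterate $\phi^+_{a[0,n]}(x)$ and $\phi^-_{a[-n,0]}(x)$ land in a rectangle interior; this fails on boundary points, so one should first run the construction on the residual set of $x$ whose whole $\mathcal{G}(\mathcal{R})$-orbit avoids the meager set $\bigcup F_{x^i_\alpha:x^i_\beta}$, then use compactness of $\Sigma_S$ and continuity of $h$ to pick up the rest of $\Gamma(\mathcal{R})$, exactly as you indicate in your final paragraph. With those adjustments the argument is complete and consistent with the paper.
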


Similar, conclusions can be drawn for the dynamics of the center-stable and center-unstable plaques using the maps $h^s,h^u$.

\esp

The symbolic model presented is not well behaved under center holonomy. This is to be expected since there is no reason for this holonomy to have any hyperbolic behavior. Take for example $f:\mathbb{T}^3\rightarrow \mathbb{T}^3$ a product of an Anosov diffeomorphism of $\mathbb{T}^2)$ and the identity of $S^1$: here the center foliation is a circle fibration and thus the center holonomies are just the identity. Even when there is some hyperbolicity associated to the center foliation it is not clear how to deal with the interchange of leaves. The simplest case would be when $f:M\rightarrow M$ is a regular element in an (abelian) Anosov action. 

\esp

\textbf{Question:} Let $\mathcal{R}^k\curvearrowright M$ Anosov action with regular element $f$. Can one get an analogue of Theorem A where the rectangles are invariant by all elements of $G$, or at least by a subgroup strictly larger than $<f>$? 

\esp

The plausible answer is No, at least without imposing some strong conditions on the action. Otherwise it seems that one would get many different invariant measures for the action, something very unusual. 

\section*{Acknowledgements}

The present results are a generalization of the ones obtained by the author his MSc project under the supervision of Mike Shub. I am very grateful to him for all the ideas and encouragement that he gave me. I have also benefited a lot from conversations wit Charles Pugh, Enrique Pujals and Federico Rodriguez-Hertz. To all of them my most sincere thanks

\bibliographystyle{alpha}
\bibliography{biblio}
\end{document}